\title{Isolated Points on Modular Curves of Prime-Power Level}
\author{Chris Calger}
\address{Wake Forest University, Winston-Salem, NC 27104}
\email{calgcs24@wfu.edu}
\theoremstyle{plain}
\newcommand{\enumref}[2]{{\hyperref[#2]{\ref*{#1}.\ref*{#2}}}}%
\begin{document}

\begin{abstract}
    An isolated point on an algebraic curve is a closed point not belonging to a collection of points of the same degree parametrized by $\mathbf{P}^1$ or a positive rank abelian subvariety of the curve's Jacobian. 
    We study the sets of $j$-invariants, in extensions of bounded degree, that arise as the $j$-invariant of an isolated point on a modular curve. 
    We obtain finiteness results on these sets for families of modular curves with prime-power level. 
    This is related to recent work of Bourdon and Ejder, who classified rational $j$-invariants of isolated points on the families $X_1(n)$ and $X_0(n)$, for $n$ a prime power. 
\end{abstract}

\maketitle

\section{Introduction}


\par 
The closed points of a smooth, projective curve $C$ over a number field $k$ are in bijection with $\Gal_k$-orbits of points in $C(\overline{k})$, where $\Gal_k \defeq \Gal(\overline{k}/k)$ is the absolute Galois group. 
The degree of a closed point corresponds to the size of the associated Galois-orbit.
A closed point $x \in C$ of degree $d$ is said to be \emph{isolated}\footnote{See Section \ref{subsection:isolated points} for a precise definition.} if it does not belong to a collection of degree $d$ closed points parametrized by $\mathbf{P}^1$ or a positive rank abelian subvariety of the curve's Jacobian. 

\par 
Faltings's Theorem \cite{Fal83} guarantees that if $C$ has genus strictly greater than $1$, then $C$ has finitely many degree $1$ points.
Such a curve $C$ has infinitely many points of arbitrary degree. 
However, it was shown in \cite{Bou19} that, as a consequence of Faltings's Theorem on rational points on subvarieties of abelian varieties \cite{Fal94},
$C$ has only finitely many isolated points.
Because of this, the study of isolated points on curves generalizes the study of rational points in a natural way. 

\par 
%
In this paper, we study isolated points on modular curves, which parametrize elliptic curves equipped with specific structure. 
If $E$ is an elliptic curve defined over a number field $k$, the action of $\Gal_k$ on the $n$-torsion subgroups $E[n]$, for positive integers $n$, induces representations
\begin{align*}
    \rho_{E, n} &: \Gal_k \to \GL_2(\ZZ/n\ZZ),
    \\ \rho_{E, \ell^\infty} &: \Gal_k \to \GL_2(\ZZ_\ell) = \projlim \GL_2(\ZZ/\ell^n\ZZ), \text{ and}
    \\ \rho_{E} &: \Gal_k \to \GL_2(\hat{\ZZ}) = \projlim \GL_2(\ZZ/n\ZZ).
\end{align*}
If $H$ is an open subgroup of $\GL_2(\ZZ_\ell)$, the modular curve $X_H$ is a curve whose non-cuspidal points parametrize elliptic curves with 
$\rho_{E, \ell^\infty}(\Gal_k)$ 
contained in $H$. To a closed point $x \in X_H$, we associate an algebraic number $j(x)$ called the \emph{$j$-invariant of $x$}, which is the image of $x$ under the natural map to the coarse moduli space of elliptic curves. 

\par
Isolated points on modular curves are related to a number of open questions concerning the arithmetic of elliptic curves. 
For a fixed non-CM elliptic curve $E$ over a number field $k$, Serre showed \cite{Ser72} that the mod-$\ell$ Galois representation $\rho_{E, \ell}$ is surjective for large primes $\ell$. Serre asked \cite[Section 4.3]{Ser72} if this result in the case of $k=\QQ$ can be made uniform---that is, if there is prime $L$ such that $\rho_{E,\ell}$ is surjective for all non-CM elliptic curves $E/\QQ$ and all primes $\ell > L$. This prime $L$ has since been conjectured by Zywina \cite[Conjecture 1.2]{Zyw24} and Sutherland \cite[Conjecture 1.1]{Sut16} to be $37$. We will refer to this conjecture as \emph{Serre's uniformity problem}.
It remains unknown if there exists 
a prime $\ell > 37$ and non-CM elliptic curve $E/\QQ$ with $\rho_{E, \ell}(\Gal_k)$ conjugate to $\Cns(\ell)$, the normalizer of a non-split Cartan subgroup of $\GL_2(\ZZ/\ell\ZZ)$. 
If such a prime $\ell$ and elliptic curve $E$ exists, then $E$ gives rise to a representative for a degree one point on the modular curve $X_{\Cns(\ell)}$. This curve has genus greater than $1$, so Faltings's Theorem implies there are finitely many degree one points on $X_{\Cns(\ell)}$, all of which are isolated. 

\par 
%
The problem of bounding the non-surjective mod-$\ell$ images of Galois is situated within a broader effort, namely Mazur's ``Program B'' \cite{Maz77}, which asks, given a number field $k$, for a classification of the images $\rho_{E}(\Gal_k)$ in $\GL_2(\hat{\ZZ})$ for elliptic curves $E/k$. 
Work in this paper is related to the $\ell$-adic case of the program, which is the problem of classifying images $\rho_{E,\ell^\infty}(\Gal_k)$ in $\GL_2(\ZZ_\ell)$. 
Determining the $\ell$-adic image of Galois attached to an elliptic curve $E/\QQ$ helps us to understand closed and isolated points with $j$-invariant $j(E)$ on modular curves, and vice-versa. 

\par
For each positive integer $n$, the modular curve $X_1(n)$ is a curve whose non-cuspidal points parametrize elliptic curves with a point of order $n$. Bourdon, Ejder, Liu, Odumodu, and Viray \cite[Corollary 1.7]{Bou19} showed that, assuming an affirmative answer to Serre's uniformity problem, there are finitely many $j$-invariants in $\QQ$ corresponding to isolated points on $X_1(n)$, as $n$ varies over all positive integers. 


\par
It is natural to ask for which other families of curves we can obtain similar finiteness results. 
In this paper, we examine the set of $j$-invariants in an extension of bounded degree that arise as the $j$-invariant of an isolated point on a modular curve of prime-power level.
\begin{theorem} \label{thm:intro:fixed prime power finiteness}
    Fix a prime $\ell \in \NN$ and define
    \[ \cI \defeq \set{x \in X_H}{x \text{ is isolated and } H \leq \GL_2(\ZZ_\ell) \text{ an open subgroup} }. \]
    For every $d \in \NN$, there are finitely many $j$-invariants of degree $d$ in $j(\cI)$. 
\end{theorem}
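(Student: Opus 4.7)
The plan is to reduce the statement for an arbitrary $H \leq \GL_2(\ZZ_\ell)$ to the corresponding statement for a canonical choice of $H$ attached to each $j$-invariant, and then invoke the single-curve finiteness of isolated points at that canonical level.

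First I would establish a descent step. Suppose $x \in X_H$ is an isolated point with $j(x) = j(E)$ of degree $d$; let $k = \QQ(j(E))$ and choose a model of $E$ over $k$ (after any necessary twist). Set $G_E \defeq \rho_{E,\ell^\infty}(\Gal_k)$, an open subgroup of $\GL_2(\ZZ_\ell)$ well-defined up to conjugacy after choosing a basis of the $\ell$-adic Tate module. After conjugating $H$ we have $G_E \leq H$, and the forgetful morphism $X_{G_E} \to X_H$ carries a closed point $x'$ with residue field $k$ and $j(x') = j(E)$ to $x$. Since images of $\mathbf{P}^1$- or positive-rank-abelian-variety-parametrized families on $X_{G_E}$ remain such families on $X_H$, isolatedness of $x$ forces isolatedness of $x'$ on $X_{G_E}$. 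This reduces the problem to bounding, as $E$ varies, the set of degree-$d$ $j$-invariants of isolated points on the canonical curves $X_{G_E}$.

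The CM case is handled separately. An elliptic curve with complex multiplication by an order $\mathcal{O}$ has $j$-invariant of degree $h(\mathcal{O})$ over $\QQ$, and the Brauer-Siegel theorem implies that only finitely many imaginary quadratic orders satisfy $h(\mathcal{O}) \leq d$. Hence there are only finitely many CM $j$-invariants of degree $d$, and this case is immediate.

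For the non-CM case, I would invoke uniform openness of $\ell$-adic images in bounded degree: for the fixed prime $\ell$ and fixed degree bound $d$, the index $[\GL_2(\ZZ_\ell) : G_E]$ is bounded uniformly over non-CM elliptic curves $E/k$ with $[k:\QQ] \leq d$, a theorem of Cadoret-Tamagawa type. This forces the set of open subgroups $G_E \leq \GL_2(\ZZ_\ell)$ arising in this way to be finite up to conjugacy. For each such $G$, the modular curve $X_G$ has finitely many isolated points by the general finiteness theorem of Bourdon-Ejder-Liu-Odumodu-Viray quoted in the introduction, and hence contributes only finitely many $j$-invariants. Summing over the finitely many $G$ completes the proof. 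The principal obstacle is the uniform boundedness input in the non-CM case: while Serre's open image theorem supplies pointwise openness, the uniformity across degree-$d$ number fields is the deep content, and correctly aligning the descent to $X_{G_E}$ with that input (in particular tracking the role of twists when $j \in \{0, 1728\}$) is the delicate part of the argument.
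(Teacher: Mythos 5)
Your proposed descent step is the crux, and it runs the wrong way. The available transfer theorem (Theorem \ref{thm:Bou19 4.3}, i.e.\ \cite[Theorem 2.15]{Ter24}, generalizing \cite[Theorem 4.3]{Bou19}) pushes isolatedness \emph{forward} along a finite map $f : C \to D$: if $x \in C$ is isolated and $\deg(x) = \deg(f)\deg(f(x))$, then $f(x)$ is isolated. You instead want to go \emph{up} the tower, from $x$ on $X_H$ to a point $x'$ on the covering curve $X_{G_E}$ with $G_E \leq H$, and you assert that isolatedness of $x$ forces isolatedness of $x'$. No theorem in this circle of ideas supplies that implication, and the heuristic you give does not close the gap: if $x'$ lies in a $\PP^1$- or AV-parametrized family on $X_{G_E}$, the pushforward of that family to $X_H$ can collapse (e.g.\ the abelian subvariety parametrizing $x'$ may die under $f_*: \PPic^0_{X_{G_E}} \to \PPic^0_{X_H}$, or the $\PP^1$-family may consist of divisors supported in the fiber over $x$), so you cannot conclude that $x$ itself is parametrized. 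Note also that in your setup $\deg(x') = \deg(x)$ while $\deg(f) = [H : G_E]$, so even the forward-direction degree condition fails; the theorem cannot be invoked in either direction here.

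There is a second, quieter issue: you claim that after conjugation $G_E \leq H$, which would require the isolated point $x$ to have residue field exactly $\QQ(j(x))$, i.e.\ $\deg(x) = d$. But the statement only bounds $[\QQ(j(x)):\QQ]$; by Corollary \ref{cor:deg(x) on X_H}, $\deg(x) = d \cdot [\rho_{E,\alpha}(\Gal_k) : \rho_{E,\alpha}(\Gal_k) \cap H]$ can be strictly larger than $d$, in which case $G_E$ is not contained in $H$ and your map $X_{G_E} \to X_H$ does not even exist. The paper avoids both problems by mapping \emph{down}: from $X_H$ (level $\ell^n$, $n$ large) to $X_{H'}$ with $H' = \pi_{\ell^m}^{-1}(H(\ell^m))$ of level dividing $\ell^m$, where $\ell^m$ is the Cadoret--Tamagawa bound on the level of $\rho_{E,\ell^\infty}(\Gal_k)$. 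The content of Proposition \ref{prop:fixed prime power deg}, proved via the subgroup-lattice computation, is precisely that this map satisfies $\deg(x) = \deg(f)\deg(f(x))$, which is what allows Theorem \ref{thm:Bou19 4.3} to push the isolated point down to one of the finitely many curves of level dividing $\ell^m$. Your appeal to Cadoret--Tamagawa and to the finiteness of isolated points per curve is aimed at the right target, but without a valid replacement for Proposition \ref{prop:fixed prime power deg} and the forward-direction transfer, the reduction to bounded level does not go through.
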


\par 
The proof uses group theory techniques and results of Cadoret and Tamagawa \cite{Cad13} and Bourdon, Ejder, Liu, Odumodu, and Viray \cite{Bou19} to obtain a bound $m$, dependent on $\ell$ and $d$, such that for any isolated point on an $\ell$-power level modular curve with $j$-invariant of degree $d$, there exists another isolated point on a modular curve of $\ell$-power level dividing $m$ having the same $j$-invariant. 
This method of reducing the level of modular curve on which an isolated point must appear is related to \cite[Theorem 1.1]{Bou19}, which states that for a non-CM elliptic curve with $m$-adic Galois representation of level $M$, for a certain integer $m$, the natural map $X_1(n) \to X_1(\gcd(n,M))$ sends isolated points to isolated points. 
We also use results of Terao \cite{Ter24} to describe the degrees of closed points $x \in X_H$ in terms of $H$ and the adelic image of Galois associated to an elliptic curve corresponding to $x$ defined over $\QQ(j(x))$.

\begin{remark}
    Terao \cite[Theorem 1.5]{Ter24} showed that if $x$ is a non-cuspidal, non-CM isolated point on a modular curve $X_H$, with $H \leq \GL_2(\hat{\ZZ})$ an open subgroup of level $7$ and $j(x) \in \QQ$, then $j(x) = 3^3\cdot 5 \cdot 7^5/2^7$. 
    Theorem \ref{thm:intro:fixed prime power finiteness} implies that there are finitely many $j$-invariants of fixed degree arising from isolated points on modular curves whose level is any power of $7$.
\end{remark}

\par 
Recall the modular curve $X_0(n)$ is a curve whose non-cuspidal points parametrize elliptic curves with a cyclic subgroup of $n$. 
In the case of prime-power level curves in the families $X_0(n)$ and $X_1(n)$, Bourdon and Ejder \cite{Bou25-2} give an unconditional result classifying the rational $j$-invariants arising from isolated points that may appear, which extends a partial classification given by Ejder \cite{Ejd22}. 
Building on these results, we study the rational $j$-invariants arising from isolated points on prime-power level curves in the
family $X_\Delta(n)$, which contains the families $X_0(n)$ and $X_1(n)$ and the so-called intermediate modular curves living between $X_0(n)$ and $X_1(n)$. 

\begin{theorem} \label{thm:intro:X Delta prime power}
    Let $\ell \in \NN$ be a prime, let $n \in \NN$ a positive number, and suppose $\Delta \leq (\ZZ/\ell^n \ZZ)^\times$ is a subgroup containing $-1$. 
    If $x \in X_\Delta(\ell^n)$ is a non-cuspidal isolated point with non-CM rational $j$-invariant $j \in \QQ$, then $j$ appears as the $j$-invariant of an isolated point on $X_0(\ell)$, for some $\ell \in \{\, 11, 17, 37 \,\}$. 
\end{theorem}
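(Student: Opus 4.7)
The plan is to reduce the statement to the Bourdon-Ejder classification \cite{Bou25-2} of rational non-CM $j$-invariants of isolated points on prime-power level $X_0$, by pushing the given isolated point down along the natural projection $\pi \colon X_\Delta(\ell^n) \to X_0(\ell^n)$. Because $-1 \in \Delta$, the curve $X_\Delta(\ell^n)$ is an intermediate cover sitting between $X_1(\ell^n)$ and $X_0(\ell^n)$, and $\pi$ is a finite morphism, étale away from cusps and elliptic points.

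Let $x \in X_\Delta(\ell^n)$ be an isolated non-cuspidal point with $j(x) = j \in \QQ$ non-CM, and set $y \defeq \pi(x) \in X_0(\ell^n)$. The main step is to show that $y$ is itself isolated on $X_0(\ell^n)$. I would proceed by contrapositive: if $y$ belongs to an infinite family of degree-$\deg(y)$ closed points parametrized by $\mathbf{P}^1$ or by a positive rank abelian subvariety of $J_0(\ell^n)$, then pulling this family back along $\pi$ produces an infinite collection of closed points on $X_\Delta(\ell^n)$ through $x$. To convert this into a contradiction with the isolation of $x$, I would compare $\deg(x)$ with subgroup indices coming from the adelic image $\rho_E(\Gal_\QQ)$ for an elliptic curve $E/\QQ$ with $j(E) = j$, using Terao's group-theoretic description of degrees \cite{Ter24} together with the isolation-transfer framework of \cite[Theorem 1.1]{Bou19}. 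Once $y$ is known to be isolated, \cite{Bou25-2} supplies a finite list of rational non-CM $j$-invariants of isolated points on $X_0(\ell^n)$, each of which already arises as the $j$-invariant of an isolated point on $X_0(\ell')$ for some $\ell' \in \{11, 17, 37\}$, in agreement with the Mazur-Kenku classification of sporadic rational isogenies.

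The main obstacle is showing that $y$ remains isolated. The generic fiber of $\pi$ has size $[(\ZZ/\ell^n\ZZ)^\times : \Delta]$, so an infinite family through $y$ need not lift to an infinite family of points of constant degree $\deg(x)$ through $x$: components of the pulled-back family may split into Galois orbits of varying sizes, and I must rule out the possibility that the orbit through $x$ remains a solitary degree-$\deg(x)$ point while the rest form an infinite family. This demands precise control over how the image of $\rho_E$ meets the subgroups defining $X_\Delta(\ell^n)$ and $X_0(\ell^n)$, the kind of calculation supported by the Cadoret-Tamagawa \cite{Cad13} and Terao \cite{Ter24} methods highlighted after Theorem \ref{thm:intro:fixed prime power finiteness}.
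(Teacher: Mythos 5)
Your plan hinges on showing that the image $y = \pi(x)$ under the natural map $\pi \colon X_\Delta(\ell^n) \to X_0(\ell^n)$ is itself isolated on $X_0(\ell^n)$, and then invoking the Bourdon--Ejder classification for $X_0$. This is not what the paper does, and the step you flag as the ``main obstacle'' is in fact a genuine gap that your sketch does not close. As the paper notes explicitly before Theorem~\ref{thm:Bou19 4.3}, it is \emph{false} in general that a finite map of curves sends isolated points to isolated points; the transfer only holds when $\deg(x) = \deg(\pi)\deg(\pi(x))$, and for the map $X_\Delta(\ell^n) \to X_0(\ell^n)$ there is no reason this maximality should hold. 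Your contrapositive sketch (pull back a $\PP^1$- or AV-family through $y$ to get one through $x$) runs into exactly the splitting/degree-drop phenomenon you identify, and ``precise control over how the image of $\rho_E$ meets the subgroups'' is not a proof of this; no such control is established, and I don't believe the intermediate claim ``$y$ is isolated on $X_0(\ell^n)$'' is even true in all the cases that arise. Note also that the theorem's conclusion is only that $j$ arises from \emph{some} isolated point on $X_0(\ell')$ with $\ell' \in \{11,17,37\}$ --- not that $\pi(x)$ itself is isolated --- so proving the stronger intermediate statement is both unnecessary and unsupported.

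The paper's actual route is structurally different and avoids the problematic projection to $X_0$. It first reduces the \emph{level} within the $X_\Delta$ family: by Proposition~\ref{prop:fixed prime power deg} (where degree maximality is proved group-theoretically using $\im\rho_{E,\alpha,\ell^\infty}$ and the level of $H$) combined with Theorem~\ref{thm:Bou19 4.3}, one can push an isolated $x \in X_\Delta(\ell^n)$ down to an isolated point on $X_\Delta(\ell^m)$, where $\ell^m$ bounds the level of the $\ell$-adic image. It then splits into the four cases of the Rouse--Sutherland--Zureick-Brown classification (Theorem~\ref{thm:RSZB l-adic}) of $\rho_{E,\ell^\infty}(\Gal_\QQ)$: Proposition~\ref{prop:X Delta prime power:cases a and b} handles the known-image cases for $\ell \neq 11, 17, 37$ using genus bounds (Theorem~\ref{thm:RR isolated}), Terao's isolation-transfer to $X_{G_n}$ (Theorem~\ref{thm:Ter24 genus 0}), and explicit Magma checks at levels $25, 27, 32$; Proposition~\ref{prop:X Delta prime power:case c} handles the non-split Cartan case via Furio's classification and degree computations; Proposition~\ref{prop:X Delta prime power:case d} handles the \texttt{49.196.9.1} case. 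Only at the end, for $\ell \in \{11,17,37\}$, does the paper connect to the $X_0$ classification. If you want to pursue your approach, you would need to either prove degree maximality for $\pi \colon X_\Delta(\ell^n) \to X_0(\ell^n)$ in each case (which the paper's case analysis suggests is delicate), or replace the projection to $X_0(\ell^n)$ with Terao's Theorem~\ref{thm:Ter24 genus 0}, which transfers isolation to $X_{G_n}$ rather than to $X_0(\ell^n)$ and is the tool the paper actually uses.
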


The proof uses the work of Bourdon and Ejder \cite{Bou25-2} and the work of Rouse, Sutherland, and Zureick-Brown \cite{RSZB22} and Furio \cite{Fur25} studying $\ell$-adic images of Galois attached to non-CM elliptic curves over $\QQ$, to determine whether a rational $j$-invariant $j(E)$ associated to a non-CM elliptic curve $E/\QQ$ may appear as the $j$-invariant of an isolated point on a modular curve $X_\Delta(\ell^n)$. 

\begin{remark}
    If we instead consider rational $j$-invariants arising from isolated points on $X_\Delta(n)$, for $n$ not necessarily a prime power, then the analogue of Theorem \ref{thm:intro:X Delta prime power} no longer holds, as noted by Lee \cite[Theorem 2]{Lee25}. Specifically, Bourdon, Hashimoto, Keller, Klagsbrun, Lowry-Duda, Morrison, Najman, and Shukla \cite[Theorem 2]{Bou24} showed that there is an isolated point on $X_1(28)$ with $j$-invariant $351/4$, but Lee found that there is no isolated point on a modular curve $X_0(n)$ with $j$-invariant $351/4$. 
\end{remark}



\subsection{Related work}
\par
This is also related to work of Menendez \cite[Theorem 5.3]{Men22}, who proved the analogue to \cite[Theorem 1.1]{Bou19} for the family $X_0(n)$. 
Lee \cite[Algorithm 1]{Lee25} gave an algorithm to test whether a non-CM $j$-invariant in $\QQ$ corresponds to an isolated point on a modular curve $X_0(n)$. 
Among all elliptic curves in the LMFDB, 
the $j$-invariants found in \cite[Table 1]{Lee25} are all of the possible $j$-invariants in $\QQ$ corresponding to an isolated point on $X_0(n)$. 
Terao \cite[Theorem 1.6]{Ter24} proved a very similar result, classifying the rational $j$-invariants arising from isolated points on $X_0(n)$, assuming a conjecture of Zywina \cite[Section 14.3]{Zyw24}. The $j$-invariants found by Lee in \cite[Table 1]{Lee25} are the same $j$-invariants appearing in \cite[Theorem 1.6]{Ter24}. 

\subsection{Outline} 
In Section \ref{sec:background}, we give the relevant background on elliptic curves, modular curves, and isolated points. In Section \ref{sec:prime-power level}, we prove Theorem \ref{thm:intro:fixed prime power finiteness}. In Section \ref{sec:IMC}, we give an overview of the family of modular curves $X_\Delta(n)$. In Section \ref{sec:IMC prime-power level}, we prove Theorem \ref{thm:intro:X Delta prime power}. 

\subsection{Acknowledgements}
The author was partially supported by NSF grant DMS-2145270. 
This work was carried out under the supervision of Abbey Bourdon, whom the author is thankful to for her guidance. 
The author also thanks Maarten Derickx for helpful conversations.




\section{Background} \label{sec:background}

\subsection{Notation and conventions} 

Let $k$ be a number field and $\overline{k}$ a fixed algebraic closure of $k$.
We denote $\Gal_k$ to be the absolute Galois group $\Gal(\overline{k}/k)$.
Let $\NN$ denote the set of natural numbers, i.e. the set of positive integers $\{\, 1, 2, 3, \dots \,\}$. For $n \in \NN$, we denote by $\phi(n)$ the number of integers $1 \leq m \leq n$ with $(m,n) = 1$. 

\par 
By a \emph{nice} curve, we mean a smooth, projective, geometrically integral curve. 
For a $k$-scheme $X$ and a field extension $K$ of $k$, we denote $X_{K}$ to be the base change $X \times_{\Spec{k}} \Spec{K}$. 

\subsection{Elliptic curves}

Let $E$ be an elliptic curve over a number field $k$. For $n \in \NN$, we denote $E[n] = E(\overline{k})[n]$ to be the subgroup of $E(\overline{k})$ consisting of $n$-torsion points---that is, points with finite order dividing $n$. The group $E[n]$ is the kernel of the multiplication-by-$n$ map $[n] : E \to E$ and is isomorphic to $(\ZZ/n\ZZ) \times (\ZZ/n\ZZ)$. 

\par 
We say an elliptic curve $E/k$ has \emph{complex multiplication (CM)} if $\End{E_{\overline{k}}}$ is not isomorphic to the integers---we will not add the distinction of an elliptic curve having potential CM, as some authors do. We say a $j$-invariant $j \in \overline{\QQ}$ is CM if there exists a CM elliptic curve $E$ with $j$-invariant equal to $j$. In this paper, we are interested in showing there are finitely many $j$-invariants, with a certain property, in number fields of a fixed bounded degree. Our main techniques to do so work only for non-CM $j$-invariants. However, results from class field theory 
allow us to restrict to the case of non-CM $j$-invariants.

\par 
Indeed, if $E/k$ has CM, then by \cite[Corollary 9.4]{Sil09}, there exists an imaginary quadratic field $K$ 
and an order $\cO$ of conductor $f$ in $K$ such that $\End(E_{\overline{k}}) \simeq \cO$. 
By \cite[Theorem 11.1]{Cox13}, we have $[\QQ(j(E)) : \QQ] = [K(j(E)) : K] = h(\cO)$, where $h(\cO)$ is the class number. 
There are only finitely many $K$ of a given class number by \cite{Hei34}
and so, by \cite[Theorem 7.24]{Cox13}, there are only finitely many imaginary quadratic orders of a given class number.
Each order is associated to only finitely many CM $j$-invariants by \cite[Corollary 10.20]{Cox13}. These results imply the following well-known theorem.
\begin{theorem} \label{thm:finitely many CM j-invariants in degree d number fields}
    For every $d \in \NN$, there are only finitely many CM $j$-invariants in number fields of degree $d$. 
\end{theorem}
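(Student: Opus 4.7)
The plan is to chain together the class number finiteness results already cited in the paragraph preceding the theorem, using the class number of the endomorphism ring as the key invariant controlling the degree of the $j$-invariant.

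First, I would fix $d \in \NN$ and consider any CM $j$-invariant $j \in \overline{\QQ}$ with $[\QQ(j):\QQ] = d$. Pick an elliptic curve $E/\QQ(j)$ with $j(E) = j$. By the cited \cite[Corollary 9.4]{Sil09}, the geometric endomorphism ring $\End(E_{\overline{k}})$ is isomorphic to some order $\cO$ in an imaginary quadratic field $K$. The identity $[\QQ(j):\QQ] = h(\cO)$ from \cite[Theorem 11.1]{Cox13} then forces $h(\cO) = d$.

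Next, I would bound the pool of possible orders $\cO$ with $h(\cO) = d$. By Heilbronn's theorem \cite{Hei34}, only finitely many imaginary quadratic fields $K$ have a given class number, so only finitely many $K$ can contain the maximal order of class number bounded in terms of $d$; more directly, \cite[Theorem 7.24]{Cox13} controls how the class number of a non-maximal order compares to that of the maximal order, so the set of pairs $(K, \cO)$ with $h(\cO) = d$ is finite. Finally, by \cite[Corollary 10.20]{Cox13}, each order gives rise to only finitely many CM $j$-invariants (in fact exactly $h(\cO)$ of them, namely the $j$-invariants of the proper $\cO$-ideal classes). Combining these three finiteness statements bounds the set of CM $j$-invariants $j$ with $[\QQ(j):\QQ] = d$. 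Since there are only finitely many number fields of degree $d$ containing a given algebraic number of degree $d$, this yields the theorem.

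There is no real obstacle here: the proof is a bookkeeping argument assembling the three cited facts. The only subtlety worth flagging is the correct use of \cite[Theorem 7.24]{Cox13}, which expresses $h(\cO)$ in terms of the conductor $f$ of $\cO$ and the class number of the maximal order of $K$; one must observe that fixing $h(\cO) = d$ bounds both $f$ and the class number of the maximal order, and hence (via Heilbronn) bounds $K$ itself, so that only finitely many $(K, f)$ — equivalently only finitely many orders $\cO$ — can occur.
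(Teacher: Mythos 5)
Your proof is correct and follows essentially the same chain of citations as the paper's (Silverman Cor.~9.4, Cox Thm.~11.1, Heilbronn, Cox Thm.~7.24, Cox Cor.~10.20), in the same order. The only small imprecision is that you fix $[\QQ(j):\QQ]=d$ exactly, whereas the theorem concerns $j$-invariants lying in some degree-$d$ field, so $[\QQ(j):\QQ]$ could be any divisor of $d$; summing your finiteness bound over all degrees $e \le d$ closes this gap trivially, and your closing sentence about counting number fields is unnecessary once this is done.
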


\subsection{Profinite constructions}

\par For a prime $\ell \in \NN$, the ring of \emph{$\ell$-adic integers}, denoted $\ZZ_\ell$, is the inverse limit $\projlim \ZZ/\ell^n \ZZ$ taken over $n \in \NN$ with homomorphisms the evaluation maps $\ZZ/\ell^j \ZZ \to \ZZ/\ell^i \ZZ$ for $i \leq j$. The \emph{profinite integers}, denoted $\hat{\ZZ}$, are the inverse limit $\projlim \ZZ/n\ZZ$ taken over the directed set consisting of natural numbers ordered by divisibility. There is a ring isomorphism
\[ \hat{\ZZ} = \projlim \ZZ/n\ZZ \simeq \prod_{\substack{\ell\in \NN \\ \text{prime}}} \projlim \ZZ/\ell^n \ZZ = \prod_{\ell} \ZZ_\ell . \]

\par For an elliptic curve $E$ over a number field $k$ and a prime $\ell \in \NN$, we define the \emph{$\ell$-adic Tate module of $E$} to be the inverse limit $T_\ell (E) \defeq \projlim E[\ell^n]$ taken over $n \in \NN$ with homomorphisms the maps $[\ell^{j-i}] : E[\ell^j] \to E[\ell^i]$ for $i \leq j$. As a $\ZZ_\ell$-module, $T_\ell(E)$ has the following structure:
\[ T_\ell(E) = \projlim E[\ell^n] \simeq \projlim \left( \ZZ/\ell^n \ZZ \times \ZZ/\ell^n \ZZ \right) = \projlim \left( \ZZ/\ell^n \ZZ \right) \times \projlim \left( \ZZ/\ell^n \ZZ \right) = \ZZ_\ell \times \ZZ_\ell. \]
Following a similar construction to $\hat{\ZZ}$, we also define the \emph{adelic Tate module of $E$} to be the inverse limit $T(E) \defeq \projlim E[n]$ so that 
\[ T(E) \simeq \prod_{\ell} T_\ell(E) \simeq \prod_\ell \ZZ_\ell \times \ZZ_\ell \simeq \hat{\ZZ} \times \hat{\ZZ} .\]

\par We will also study the profinite group $\GL_2(\hat{\ZZ}) \simeq \projlim \GL_2(\ZZ/n\ZZ)$. 
For each $n \in \NN$, there exists a natural projection map $\pi_n : \GL_2(\hat{\ZZ}) \to \GL_2(\ZZ/n\ZZ)$ induced by the inverse limit. Similarly, by abuse of notation, for a prime number $\ell$, we denote $\pi_{\ell^n}$ to be the natural projection map $\GL_2(\ZZ_\ell) \to \GL_2(\ZZ/\ell^n\ZZ)$. 
For a subgroup $H \leq \GL_2(\hat{\ZZ})$, we denote $H(n)$ to be $\pi_n (H)$. In general, we have $H \leq \pi_n\inv(H(n))$. If $H \leq \GL_2(\hat{\ZZ})$ is an open subgroup, then there exists a positive integer $n \in \NN$ such that $\ker \pi_n \leq H$. We define the \emph{level of $H$} to be the least such positive integer. If $H$ has level $n$, then $H = \pi_n\inv(H(n))$ and the index of $H$ in $\GL_2(\hat{\ZZ})$ (which is finite because $H$ is open) is equal to the index of $H(n)$ in $\GL_2(\ZZ/n\ZZ)$. 

\par
In Section \ref{sec:IMC prime-power level}, we will consider specific subgroups $H \leq \GL_2(\hat{\ZZ})$ with $\det(H) = \hat{\ZZ}^\times$. We will refer to these subgroups using the notation of Rouse, Sutherland, and Zureick-Brown \cite[Section 2.4]{RSZB22}, which assigns to each subgroup $H$ a label
\begin{center}
    \verb|N.i.g.n|
\end{center}
where \verb|N| is the level of $H$, \verb|i| is the index of $H$ in $\GL_2(\hat{\ZZ})$, \verb|g| is the genus of the modular curve $X_H$, and \verb|n| is an integer which specifies $H$ among other subgroups having the same level, index, and genus.

\subsection{Galois representations}

For every $\sigma \in \Gal_k$ and $P \in E$, we denote by $P^\sigma$ the image of the pair $(P, \sigma)$ under the natural (right) action $E \times \Gal_k \to E$. For every $P \in E[n]$, we have $[n] P = O$, where $O$ is the base point of $E$. For every $\sigma\in \Gal_k$, 
\[ [n] \left(P^\sigma \right) = \left( [n] P \right)^\sigma = O^\sigma = O, \]
so the natural action of $\Gal_k$ on $E[n]$ is well-defined. This action defines a representation
\[ \rho_{E,n} : \Gal_k \to \Aut \left( E[n] \right) \]
called the \emph{mod-$n$ Galois representation associated to $E$}.
Let $\alpha : E[n] \to (\ZZ/n\ZZ) \times (\ZZ/n\ZZ)$ be an isomorphism of $(\ZZ/n\ZZ)$-modules. The \emph{mod-$n$ Galois representation associated to $E$ and $\alpha$} is the composition of the above representation with the isomorphism $\Aut(E[n]) \simeq \GL_2(\ZZ/n\ZZ) $ induced by $\alpha$ and is denoted
\[ \rho_{E,n,\alpha} : \Gal_k \to \Aut \left( E[n] \right) \xra{\simeq} \GL_2(\ZZ/n\ZZ) . \]
If $\beta : E[n] \to (\ZZ/n \ZZ) \times (\ZZ/n \ZZ)$ is another isomorphism of groups, then $\rho_{E,n,\alpha}(\Gal_k)$ and $\rho_{E,n,\beta}(\Gal_k)$ are conjugate in $\GL_2(\ZZ/n\ZZ)$. 

\par
Suppose $\ell \in \NN$ is a prime and $\alpha : T_\ell(E) \to \ZZ_\ell \times \ZZ_\ell$ is an isomorphism. The action of $\Gal_k$ on $E[\ell^n]$ for every $n\in\NN$ induces an action on $T_\ell(E)$. This defines a representation
\[ \rho_{E, \ell^\infty, \alpha} : \Gal_k \to \Aut(T_\ell(E)) \xra{\simeq} \GL_2(\ZZ_\ell) \]
called the \emph{$\ell$-adic Galois representation associated to $E$ and $\alpha$}. 
Similarly, the action of $\Gal_k$ on $E[n]$ for every $n \in \NN$ extends to an action on $T(E)$. If $\alpha : T(E) \to \hat{\ZZ} \times \hat{\ZZ}$ is an isomorphism, then this action induces a representation
\[ \rho_{E, \alpha} : \Gal_k \to \Aut(T(E)) \xra{\simeq} \GL_2(\hat{\ZZ}) \]
called the \emph{adelic Galois representation associated to $E$ and $\alpha$}. 
Just as with the mod-$n$ Galois representation, we may not keep track of the isomorphisms $\alpha$ if we only care about the image of Galois up to conjugation. 

\par
Serre showed \cite[Th\'eor\`eme 3]{Ser72} that for a non-CM elliptic curve $E/k$, the adelic representation of Galois, $\rho_E : \Gal_k \to \GL_2(\hat{\ZZ})$ has open image in $\GL_2(\hat{\ZZ})$. Note that since the coset of an open subgroup of a profinite group is open, $\rho_{E,\alpha}(\Gal_k)$ is open regardless of the isomorphism $\alpha$, as this only changes the image of Galois up to conjugation. 


\subsection{Modular curves}


In essence, for $H$ an open subgroup of $\GL_2(\ZZ/n\ZZ)$ (resp., $\GL_2(\ZZ_\ell)$; $\GL_2(\hat{\ZZ})$), the modular curve $X_H$ is a curve whose non-cuspidal points parametrize elliptic curves having mod-$n$ (resp., $\ell$-adic; adelic) image of Galois contained in $H$. We now give a more precise definition. 

\par
Let $H$ be a subgroup of $\GL_2(\ZZ/n\ZZ)$ and let $E$ be an elliptic curve defined over a number field $k$. We say two isomorphisms $\alpha, \alpha' : E[n] \to (\ZZ/n\ZZ) \times (\ZZ/n\ZZ)$ are \emph{$H$-equivalent}, denoted $\alpha \sim_H \alpha'$ if there exists $h \in H$ such that $\alpha = h \circ \alpha'$. Let $E$ be an elliptic curve over a number field $k$. An \emph{$H$-level structure on $E$} is an equivalence class $[\alpha]_H$ of isomorphisms $\alpha : E[n] \to (\ZZ/n\ZZ) \times (\ZZ/n\ZZ)$ under $H$-equivalence.

\par
We define the modular curve $Y_H$ (resp. $X_H$) to be the coarse moduli space of the stack $\cM_H^0$ (resp. $\cM_H$), which parametrizes elliptic curves (resp. generalized elliptic curves) with $H$-level structure. The modular curve $Y_H$ is an affine subscheme of $X_H$. We call the s of $X_H - Y_H$ \emph{cusps} and the closed points of $Y_H$ \emph{non-cuspidal points}. The curve $X_H$ is a smooth, projective, integral curve over $\QQ$ and is geometrically integral if and only if $H$ has full determinant. 

\par
If $H$ is an open subgroup of $\GL_2(\hat{\ZZ})$ or $\GL_2(\ZZ_\ell)$ of level $n$, then we define the modular curve $X_H$ to be the modular curve $X_{H(n)}$. If $m$ is any positive integer divisible by $n$, then the modular curve $X_H = X_{H(n)}$ is isomorphic to the modular curve $X_{H(m)}$. Even if $H$ does not contain $-I$, we always have that the modular curves $X_H$ and $X_{\pm H}$ are isomorphic as curves. Because of this, we will only consider subgroups $H$ containing $-I$. 

\par
We give a precise description of the geometric non-cuspidal points of $X_H$. 
The set $Y_H(\overline{k})$ consists of equivalence classes of pairs $(E, [\alpha]_H)$, where $E/k$ is an elliptic curve and $[\alpha]_H$ is an $H$-level structure on $E$. We say two pairs $(E, [\alpha]_H)$ and $(E', [\alpha']_H)$ are equivalent if there exists an isomorphism $\phi : E \to E'$ such that the induced isomorphism $\phi : E[n] \to E'[n]$ satisfies $\alpha \sim_H \alpha' \circ \phi $, i.e. $\alpha = h \circ \alpha' \circ \phi$, for some $h \in H$. 

\par
The absolute Galois group $\Gal_k$ admits a right action on $Y_H(\overline{k})$ as follows:
For an automorphism $\sigma \in \Gal_k$ and a point $[(E, [\alpha]_H)] \in Y_H(\overline{k})$, we define 
\[ [(E,[\alpha]_H)] \cdot \sigma = [(E^\sigma, [\alpha \circ \sigma\inv]_H)]. \]
This action is well-defined: Indeed, if $(E', [\alpha']_H), [(E, [\alpha]_H)] \in Y_H(\overline{k})$ and $\sigma \in \Gal_k$, then there exists an isomorphism $\phi : E[n] \to E'[n]$ and an element $h \in H$ such that $\alpha = h \circ \alpha' \circ \phi$. This implies 
\[ \alpha \circ \sigma\inv = (h \circ \alpha' \circ \phi) \circ \sigma \inv = h \circ (\alpha' \circ \sigma \inv) \circ (\sigma \circ \phi \circ \sigma \inv) . \]
The map $\sigma \circ \phi \circ \sigma \inv : E^\sigma[n] \to E'^\sigma [n]$ given by $P \mapsto \phi(P^{\sigma\inv})^\sigma$ is an isomorphism, so 
\[ ((E', [\alpha']_H) \cdot \sigma) = (E'^\sigma, [\alpha' \circ \sigma \inv]_H)  \in [(E^\sigma, [\alpha \circ \sigma\inv]_H)] = [(E, [\alpha]_H) \cdot \sigma] . \] 

\par 
A \emph{closed point} of $X_H$ is a point $x \in X_H$ such that $\{\, x \,\}$ is Zariski closed in $X$ and the \emph{degree} (over $\QQ$) of the closed point, denoted $\deg(x)$, is the degree of the residue field of $x$ over $\QQ$. By {\cite[Proposition 2.4.6]{Poo17}}, there is a one-to-one correspondence between closed points of $X_H$ and $\Gal_k$-orbits of points in $X_H(\overline{k})$ and the degree of a closed point corresponds to the size of the $\Gal_k$-orbit. We will often refer to such a $\Gal_k$-orbit as a closed point. 

\par 
We say a point in $X_H(\overline{k})$ is \emph{$k$-rational} if it is fixed by every element of $\Gal_k$. Equivalently, a $k$-rational point is a closed point of degree $1$.

\par
We say a pair $(E, [\alpha]_H)$, consisting of an elliptic curve $E/\QQ(j(E))$ and an $H$-level structure $[\alpha]_H$ on $E$, is a \emph{minimal representative} for a closed point $x \in X_H$ if $x$ corresponds to the $\Gal_\QQ$-orbit of  $[(E, [\alpha]_H)] \in X_H(\overline{\QQ})$. A minimal representative for a non-cuspidal closed point always exists by \cite[Lemma 4.9]{Ter24}.



\begin{theorem}[{\cite[Theorem 4.24]{Ter24}}] \label{thm:Ter24 4.24}
    Let $H \leq \GL_2 (\hat{\ZZ})$ be an open subgroup and let $x \in X_H$ be a non-cuspidal closed point with minimal representative $(E, [\alpha]_H)$ defined over $k \defeq \QQ(j(E))$. Let $A_{E,\alpha} \leq \GL_2(\hat{\ZZ})$ be the subgroup $\set{\alpha \circ \phi \circ \alpha\inv}{\phi \in \Aut(E_{\overline{k}})}$.
    Then 
    \[ \deg(x) = [\QQ(j(E)) : \QQ][\rho_{E,\alpha}(\Gal_k) A_{E,\alpha} : \rho_{E,\alpha}(\Gal_k) A_{E,\alpha} \cap A_{E, \alpha} H]. \]
\end{theorem}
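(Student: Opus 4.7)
The plan is to factor $\deg(x)$ through the $j$-line map $X_H \to X(1) = \mathbf{P}^1_j$. Set $k = \QQ(j(E))$ and fix a geometric preimage $\tilde{x} = [(E,[\alpha]_H)] \in X_H(\overline{\QQ})$ of $x$. The image $j(\tilde{x}) = j(E)$ is a closed point of $X(1)$ with $\Gal_\QQ$-orbit of size $[k:\QQ]$, and because the fiber of $X_H \to X(1)$ over $j(E)$ is $\Gal_k$-stable, a standard orbit-stabilizer argument yields $\deg(x) = [k:\QQ]\cdot |\Gal_k\cdot\tilde{x}|$. It therefore suffices to compute the size of the $\Gal_k$-orbit of $\tilde{x}$.

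I would then identify the geometric fiber over $j(E)$ with the double-coset space $H\backslash\GL_2(\hat{\ZZ})/A_{E,\alpha}$: every isomorphism $T(E)\to\hat{\ZZ}\times\hat{\ZZ}$ is uniquely of the form $g\circ\alpha$ for some $g\in\GL_2(\hat{\ZZ})$, and unwinding the equivalence of pairs shows $(E,[g_1\alpha]_H)\sim(E,[g_2\alpha]_H)$ iff $g_1 = h\,g_2\,(\alpha\phi\alpha^{-1})$ for some $h\in H$, $\phi\in\Aut(E_{\overline{k}})$. Because $E$ is defined over $k$, for $\sigma\in\Gal_k$ we have $E^\sigma = E$ and $\alpha\circ\sigma^{-1} = \rho_{E,\alpha}(\sigma)^{-1}\circ\alpha$, so $\sigma$ acts on the fiber by $[Hg\,A_{E,\alpha}]\mapsto [Hg\,\rho_{E,\alpha}(\sigma)^{-1}\,A_{E,\alpha}]$. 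Consequently the orbit of the trivial coset $[H\,A_{E,\alpha}]$ is exactly $\{\,Hg\,A_{E,\alpha} : g\in\rho_{E,\alpha}(\Gal_k)\,\}$.

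The structural fact that makes the count work is that $\rho_{E,\alpha}(\Gal_k)$ normalizes $A_{E,\alpha}$: for $\phi\in\Aut(E_{\overline{k}})$ the Galois conjugate $\phi^\sigma$ still lies in $\Aut(E_{\overline{k}})$, and translating through $\alpha$ gives $\rho_{E,\alpha}(\sigma)(\alpha\phi\alpha^{-1})\rho_{E,\alpha}(\sigma)^{-1} = \alpha\phi^\sigma\alpha^{-1}\in A_{E,\alpha}$. Using this, $Hg_1 A_{E,\alpha} = Hg_2 A_{E,\alpha}$ with $g_1,g_2\in\rho_{E,\alpha}(\Gal_k)$ reduces to $g_1g_2^{-1}\in HA_{E,\alpha}$, so the orbit relation on $\rho_{E,\alpha}(\Gal_k)$ is cosetwise for the set $HA_{E,\alpha}\cap\rho_{E,\alpha}(\Gal_k)$. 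Since ``same double coset'' is an equivalence relation, this set must be a subgroup, and as a subgroup it is closed under inversion, hence coincides with $A_{E,\alpha}H\cap\rho_{E,\alpha}(\Gal_k)$. The orbit size is therefore $[\rho_{E,\alpha}(\Gal_k):\rho_{E,\alpha}(\Gal_k)\cap A_{E,\alpha}H]$, and a short cardinality calculation (multiplying numerator and denominator by $|A_{E,\alpha}|/|A_{E,\alpha}\cap\rho_{E,\alpha}(\Gal_k)|$) converts this into the desired $[\rho_{E,\alpha}(\Gal_k)A_{E,\alpha} : \rho_{E,\alpha}(\Gal_k)A_{E,\alpha}\cap A_{E,\alpha}H]$.

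The main obstacle is precisely this bookkeeping: a priori neither $A_{E,\alpha}H$ nor $HA_{E,\alpha}$ is a subgroup of $\GL_2(\hat{\ZZ})$, so one must extract from the double-coset equivalence relation that their intersection with $\rho_{E,\alpha}(\Gal_k)$ is a subgroup, and verify that $\rho_{E,\alpha}(\Gal_k)A_{E,\alpha}\cap A_{E,\alpha}H$ is then genuinely a subgroup of the ambient group $\rho_{E,\alpha}(\Gal_k)A_{E,\alpha}$ so that the right-hand index in the statement is a well-defined group index rather than a bare cardinality ratio.
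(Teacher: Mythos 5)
The paper does not prove this statement; it is quoted verbatim from Terao \cite[Theorem 4.24]{Ter24}, and no argument appears in the present text to compare against. So I will assess your reconstruction on its own terms.

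Your overall architecture is sound and is the standard moduli-theoretic route that such a statement would follow: reduce to counting the $\Gal_k$-orbit of a geometric point in the fiber over $j(E)$, identify that fiber with the double-coset space $H\backslash \GL_2(\hat{\ZZ})/A_{E,\alpha}$, show that $\Gal_k$ acts through $\rho_{E,\alpha}$ by right translation, observe that $\rho_{E,\alpha}(\Gal_k)$ normalizes $A_{E,\alpha}$, and conclude by orbit--stabilizer. The reduction $\deg(x)=[k:\QQ]\cdot|\Gal_k\cdot\tilde{x}|$, the double-coset identification, the formula for the action, and the normalization claim are all correct.

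Where the argument has a genuine gap is in the last step, and you yourself flag it without closing it. You arrive at orbit size $[R : R\cap A_{E,\alpha}H]$, where $R=\rho_{E,\alpha}(\Gal_k)$ and $A=A_{E,\alpha}$, and then assert that ``multiplying numerator and denominator by $|A|/|A\cap R|$'' converts this to $[RA : RA\cap AH]$. That manipulation implicitly uses the set identity $RA\cap AH=(R\cap AH)A$, which is not obvious by direct computation: for $g=ra\in RA\cap AH$ one cannot in general conclude $r\in AH$, since $AH$ is not a group. Moreover, since $R$, $H$, $RA$ are infinite profinite groups and only the indices (not the orders) are finite, ``cardinality calculations'' need to be phrased as index calculations from the outset. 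Finally, you note that one ``must verify'' that $RA\cap AH$ is a subgroup of $RA$ so that the right-hand side of the theorem is a bona fide index, but you do not supply that verification.

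The cleanest repair avoids the conversion entirely. Since $R$ normalizes the finite group $A$, the product $RA$ is a group and still normalizes $A$. Therefore right multiplication by $RA$ is a well-defined action on double cosets: if $HgA=Hg'A$ and $x\in RA$, then $Hg'xA=Hg x (x^{-1}ax)A=HgxA$. Because $A$ absorbs into the double coset, the $RA$-orbit of $HA$ coincides with the $R$-orbit you already identified. The stabilizer of $HA$ in $RA$ is $\{x\in RA : HxA=HA\}=RA\cap HA$; being a stabilizer, it is automatically a subgroup, which disposes of your final worry. Orbit--stabilizer then gives the orbit size as $[RA:RA\cap HA]$ directly. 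To match the printed statement, note that a subgroup is closed under inversion and $(RA\cap HA)^{-1}=AR\cap AH=RA\cap AH$, so $RA\cap HA=RA\cap AH$. This yields $\deg(x)=[\QQ(j(E)):\QQ]\,[\rho_{E,\alpha}(\Gal_k)A_{E,\alpha}:\rho_{E,\alpha}(\Gal_k)A_{E,\alpha}\cap A_{E,\alpha}H]$ with no cardinality juggling and no unproven set identity.
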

If $j(x) \neq 0, 1728$---in particular, if $E$ is non-CM---then $\Aut(E_{\overline{k}}) = \{\, \pm 1 \,\}$, so $A_{E,\alpha} = \{\, \pm I \,\}$. Since we are assuming $H$ contains $-I$, we get that $A_{E,\alpha}H = \{\, \pm I \,\} H = H$. This yields:
\begin{corollary} \label{cor:deg(x) on X_H}
     Let $H \leq \GL_2 (\hat{\ZZ})$ be an open subgroup containing $-I$ and let $x \in X_H$ be a non-cuspidal closed point with minimal representative $(E, [\alpha]_H)$ defined over $k \defeq \QQ(j(E))$ such that $j(E) \neq 0, 1728$. Then 
     \[ \deg(x) = [\QQ(j(E)) : \QQ][\rho_{E, \alpha}(\Gal_k) : \rho_{E, \alpha}(\Gal_k) \cap H] . \]
\end{corollary}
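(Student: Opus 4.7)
The plan is to apply Theorem \ref{thm:Ter24 4.24} and simplify using both hypotheses: $j(E) \neq 0, 1728$ and $-I \in H$.

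First, since $j(E) \neq 0, 1728$, one has $\Aut(E_{\overline{k}}) = \{\pm 1\}$, and hence the subgroup $A_{E,\alpha} \leq \GL_2(\hat{\ZZ})$ appearing in Theorem \ref{thm:Ter24 4.24} is $\{\pm I\}$, independent of the choice of $\alpha$. Because $-I \in H$, this immediately gives $A_{E,\alpha} H = \{\pm I\} H = H$. Setting $G = \rho_{E,\alpha}(\Gal_k)$, Theorem \ref{thm:Ter24 4.24} therefore reduces to $\deg(x) = [\QQ(j(E)) : \QQ] \cdot [G\{\pm I\} : G\{\pm I\} \cap H]$, and the corollary will follow once I show $[G\{\pm I\} : G\{\pm I\} \cap H] = [G : G \cap H]$.

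The key step is the set-theoretic identity $G\{\pm I\} \cap H = (G \cap H)\{\pm I\}$. The inclusion $\supseteq$ is clear since $\{\pm I\} \subseteq H$; conversely, if $g\varepsilon \in H$ with $g \in G$ and $\varepsilon \in \{\pm I\}$, then $g = (g\varepsilon)\varepsilon^{-1} \in H$ (as $\varepsilon \in H$), so $g \in G \cap H$. Once this is in hand, the claim $[G\{\pm I\} : (G \cap H)\{\pm I\}] = [G : G \cap H]$ follows by a direct coset argument using that $\{\pm I\}$ is central: the inclusion $G \hookrightarrow G\{\pm I\}$ induces a well-defined surjection of coset spaces $G / (G \cap H) \twoheadrightarrow G\{\pm I\} / (G \cap H)\{\pm I\}$, and injectivity reduces to the identity $G \cap (G \cap H)\{\pm I\} = G \cap H$, which is verified by a short case analysis on whether $-I \in G$ (both cases give the same conclusion because $-I \in H$).

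I do not expect any genuine obstacle: the argument is a routine unwinding of Theorem \ref{thm:Ter24 4.24}, and the paragraph immediately preceding the corollary already records the two substantive identifications $A_{E,\alpha} = \{\pm I\}$ and $A_{E,\alpha} H = H$. The only point requiring care is the final reduction of the index, and this is purely formal given that $-I$ lies in both $H$ and $A_{E,\alpha}$.
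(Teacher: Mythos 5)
Your proposal is correct and takes essentially the same approach as the paper: apply Theorem \ref{thm:Ter24 4.24}, identify $A_{E,\alpha} = \{\pm I\}$ from $j(E) \neq 0, 1728$, and use $-I \in H$ to get $A_{E,\alpha} H = H$. The paper records only these two identifications and passes directly to the stated formula, leaving implicit the index simplification $[G\{\pm I\} : G\{\pm I\} \cap H] = [G : G \cap H]$ that you spell out and correctly verify.
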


\par
If $H_1$ and $H_2$ are subgroups of $\GL_2(\ZZ/n\ZZ)$ containing $-I$ with $H_1 \leq H_2$, then there is a natural inclusion map $X_{H_1} \to X_{H_2}$. Indeed, if $(E, [\alpha]_{H_1})$ is a representative for a non-cuspidal point in $Y_{H_1}(\overline{k})$, then $[\alpha]_{H_1}$ defines an $H_2$-level structure on $E$ because $\alpha \sim_{H_1} \alpha'$ implies $\alpha \sim_{H_2} \alpha'$. We summarize a result from \cite{Dia06}, which describes the degree of this inclusion map. 

\begin{theorem}[{\cite[Page 66]{Dia06}}] \label{thm:Dia06 p 66:contain -I}
    Let $H_1, H_2 \leq \GL_2(\hat{\ZZ})$ be open subgroups containing $-1$. Suppose $H_1 \leq H_2$ and denote $f$ to be the natural map $X_{H_1} \to X_{H_2}$ of curves over $\QQ$. Then $\deg(f) = [H_2 : H_1]$. 
\end{theorem}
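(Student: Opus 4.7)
The plan is to compute $\deg(f)$ by counting geometric points in the fiber of $f$ over a sufficiently generic point of $X_{H_2}$. Since the degree of a finite morphism between integral $\QQ$-curves is preserved under base change and, at any unramified point, equals the cardinality of the geometric fiber, this reduces the theorem to a moduli-theoretic counting argument carried out over $\overline{\QQ}$.

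First, I would select a point $y \in Y_{H_2}(\overline{\QQ})$ with minimal representative $(E, [\alpha]_{H_2})$ such that $E$ is non-CM and $j(E) \neq 0, 1728$; such points form a dense open subset of $X_{H_2}$, and the covering $f$ is unramified over this subset, since ramification in covers of modular curves is concentrated at cusps and elliptic points. At such $y$ we have $\Aut(E_{\overline{\QQ}}) = \{\pm I\}$, which is contained in both $H_1$ and $H_2$ by hypothesis. A point of $f^{-1}(y)$ is then a class $[(E, [\beta]_{H_1})] \in Y_{H_1}(\overline{\QQ})$ mapping to $[(E, [\alpha]_{H_2})]$; unwinding the equivalence relation on pairs and using that the $\Aut(E) = \{\pm I\}$ factor is absorbed into $H_2$, this forces $\beta = h\alpha$ for some $h \in H_2$. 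Similarly, two such representatives $[(E, [h\alpha]_{H_1})]$ and $[(E, [h'\alpha]_{H_1})]$ define the same point in $Y_{H_1}(\overline{\QQ})$ exactly when $h$ and $h'$ lie in the same right coset of $H_1$ in $H_2$ (again using $-I \in H_1$ to absorb the $\Aut(E)$ action). Hence $f^{-1}(y)$ is in bijection with $H_1 \backslash H_2$, which has cardinality $[H_2 : H_1]$.

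The main obstacle is the ramification claim: I need $f$ to be étale at $y$ in order to identify $|f^{-1}(y)|$ with $\deg(f)$ rather than merely bounding $\deg(f)$ from below. This is a standard feature of covers of coarse moduli spaces of elliptic curves with level structure, where ramification occurs only over cusps and over the exceptional $j$-invariants $0$ and $1728$; excluding these leaves a nonempty open set, at every point of which $f$ is étale. A secondary bookkeeping concern is that $X_{H_1}$ need not be geometrically integral when $\det(H_1) \subsetneq \hat{\ZZ}^\times$, but this is harmless: $\deg(f)$ is defined as the degree of the associated function field extension, is invariant under base change to $\overline{\QQ}$, and the counting argument above sums correctly across the geometric components of $X_{H_1, \overline{\QQ}}$ to recover $[H_2 : H_1]$.
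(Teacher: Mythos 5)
The paper itself offers no proof of this statement; it simply cites it to \cite[p.~66]{Dia06}, where the analogous computation is carried out analytically by counting $\Gamma$-orbits in $\HH^*$ over a point with trivial stabilizer. Your moduli-theoretic version is correct and amounts to the same coset count seen from the algebraic side: over a non-cuspidal geometric point $y$ with $j(E) \neq 0, 1728$, the fiber of $f$ is in bijection with $H_1 \backslash H_2$, the map is \'etale there (ramification indices of $X_{H_1} \to X_{H_2}$ divide those of $X_{H_1} \to X(1)$, which are trivial away from cusps and elliptic points once $-I$ lies in the groups), and flatness of $f$ plus base change to $\overline{\QQ}$ lets you read $\deg(f)$ off the fiber cardinality even when $X_{H_1}$ or $X_{H_2}$ fails to be geometrically integral. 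Your handling of that last bookkeeping point is exactly right.

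Two small imprecisions worth flagging. First, the non-CM hypothesis on $E$ is superfluous and slightly misleading: what the counting argument actually uses is $\Aut(E_{\overline{\QQ}}) = \{\pm I\}$, which is equivalent to $j(E) \neq 0, 1728$ and has nothing to do with whether $E$ has CM. Second, the set of points with non-CM $j$-invariant is not Zariski-open (CM $j$-invariants are dense), so ``such points form a dense open subset'' is not literally correct; the genuine dense open set you need is the non-cuspidal locus with $j \neq 0, 1728$, on which $f$ is \'etale, and one then picks any $\overline{\QQ}$-point there. Neither issue affects the validity of the argument.
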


\subsection{Isolated points} \label{subsection:isolated points}

\par
Let $C$ be a smooth, projective curve over a number field $k$. 
Let $\PPic_{C/k}$ be the Picard scheme of $C$ and let $\PPic_{C/k}^0$ be the connected component of the identity. 
Denote $\DDiv_{C/k}$ to be the divisor scheme of $C$. 
Denote by $\AA_{C/k} : \DDiv_{C/k} \to \PPic_{C/k}$ the Abel map, and by $\WW_{C/k}$ its image. 
We refer the reader to \cite[Section 2]{Ter24} for more details. 


\par 
Let $x \in C$ be a closed point. By taking $x$ to be a sum of $\Gal_k$-conjugates, we can view $x$ as an element of $\DDiv_{C/k}(k)$. 
\begin{enumerate}
    \item We say $x$ is \emph{$\PP^1$-parametrized} if 
    there exists $x' \in \DDiv_{C/k}(k)$ with $x' \neq x$ such that $\AA_{C/k}(x) = \AA_{C/k}(x')$.
    We say $x$ is \emph{$\PP^1$-isolated} if it is not $\PP^1$-parametrized.
    \item We say $x$ is \emph{AV-parametrized} if 
    there exists a positive rank abelian subvariety \break ${A \subseteq \PPic_C^0}$ such that $\AA_{C/k}(x) + A \subseteq \WW_{C/k}$.
    We say $x$ is \emph{AV-isolated} if it is not AV-parametrized.
    \item We say $x$ is \emph{isolated} if it is both $\PP^1$-isolated and AV-isolated.
    \item We say $x$ is \emph{sporadic} if there are finitely many points $y \in C$ with $\deg(y) \leq \deg(x)$. 
\end{enumerate}

\par 
The study of isolated points is motivated by Faltings's theorem \cite{Fal83} which says that if $C/k$ has genus strictly greater than $1$, then $C(k)$ is finite and hence every point in $C(k)$ is sporadic. Indeed, it was shown in \cite[Theorem 4.2]{Bou19} that on $C/k$, every sporadic point is isolated and there are only finitely many isolated points. So, if $C/k$ has genus strictly greater than $1$, the study of isolated points is a natural extension of the study of rational points. 

\par
The following theorem generalizes \cite[Lemma 2.3]{Ejd22} and will be particularly useful to us in Section \ref{sec:IMC prime-power level}. 

\begin{theorem}[{\cite[Theorem 2.17]{Ter24}}] \label{thm:RR isolated}
    Let $C$ be a smooth, projective curve over a number field $k$ and let $K \defeq k(C) \cap \overline{k}$. Let $r = [K : k]$ be the number of geometric components of $C$ and let $g$ be the genus of $C_K$. If $x \in C$ is a closed point with $\deg(x) > rg$, then $x$ is $\PP^1$-parametrized. 
\end{theorem}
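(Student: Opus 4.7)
The plan is to reduce to the classical Riemann--Roch trick on a geometrically integral curve by working over the field of constants $K = k(C) \cap \overline{k}$. Since $K$ is the algebraic closure of $k$ inside $k(C)$, the structure morphism $C \to \Spec k$ factors through $\Spec K$, presenting $C$ as a smooth projective geometrically integral curve over $K$. After base change to $\overline{k}$, the scheme $C_K$ decomposes into $r$ Galois-conjugate components each of genus $g$, so $g$ is precisely the genus of the $K$-curve $C$.

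The only numerical input needed is a comparison of degrees: for any closed point $y \in C$, the residue field $\kappa(y)$ contains $K$, and the tower of extensions gives
\[
  [\kappa(y):k] \;=\; [K:k] \cdot [\kappa(y):K] \;=\; r \cdot \deg_K(y).
\]
Applied to our point $x$, this rewrites the hypothesis $\deg(x) > rg$ as $\deg_K(x) \geq g + 1$.

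Riemann--Roch on the geometrically integral curve $C/K$ then yields
\[
  \ell(x) \;\geq\; \deg_K(x) - g + 1 \;\geq\; 2,
\]
so there exists a non-constant $f \in K(C) = k(C)$ with $\operatorname{div}(f) + x \geq 0$. The divisor $D \defeq \operatorname{div}(f) + x$ is effective, distinct from $x$ (because $\operatorname{div}(f) \neq 0$), and linearly equivalent to $x$. Since $C/k$ and $C/K$ share the same underlying scheme, hence the same group of divisors, $D$ automatically defines an element of $\DDiv_{C/k}(k)$, and linear equivalence forces $\AA_{C/k}(D) = \AA_{C/k}(x)$. This exhibits $x$ as $\PP^1$-parametrized.

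The main subtlety will be keeping the base-field bookkeeping straight---interpreting ``genus of $C_K$'' as the genus of the geometrically integral $K$-curve $C$, and tracking the factor of $r$ between $k$- and $K$-degrees. Once this is done, no honest Galois descent is required: the function $f$ produced by Riemann--Roch over $K$ already lives in the common function field $k(C)$, and its associated effective divisor is already a $k$-rational point of $\DDiv_{C/k}$.
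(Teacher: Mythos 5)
Your proposal is correct, and it takes the approach that one would expect for this statement: pass to the field of constants $K$, apply Riemann--Roch to the geometrically integral $K$-curve, and translate back to the $k$-rational picture. Since the paper cites Terao's Theorem~2.17 without reproducing its proof, I assess the argument on its own.

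The key observations all check out. Since $C$ is normal and proper over the number field $k$, the structure morphism does factor through $\Spec K$, and $C$ is geometrically integral over $K$ (geometric connectedness follows from the definition of $K$, and geometric reducedness is automatic in characteristic zero). Every residue field contains $K$, giving $\deg_k(y) = r\deg_K(y)$ for each closed point $y$, and the hypothesis rewrites cleanly as $\deg_K(x)\ge g+1$. Riemann--Roch over $K$ then produces a non-constant $f$ with $\operatorname{div}(f)+x\ge 0$, and the resulting effective divisor $D$ is a $k$-point of $\DDiv_{C/k}$ with $D\ne x$ and $\AA_{C/k}(D)=\AA_{C/k}(x)$ by linear equivalence. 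This is exactly the definition of $\PP^1$-parametrized used in Section~2.

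Two small remarks for precision. First, the sentence ``After base change to $\overline{k}$, the scheme $C_K$ decomposes into $r$ Galois-conjugate components'' conflates $C_K$ with $C_{\overline{k}}$; it is $C_{\overline{k}}$ that splits into $r$ geometric components each of genus $g$, while $C_K$ itself may have fewer than $r$ components if $K/k$ is not Galois. Fortunately this is only motivational and is never invoked in the actual argument. Second, it would be cleaner to say explicitly that ``the genus of $C_K$'' in the statement should be read as the genus of any geometric component, which coincides with the genus of $C$ as a curve over $K$; that is the quantity entering Riemann--Roch, and it is the interpretation you implicitly use. Neither point affects correctness.
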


\par 
For a finite locally free map of curves, we have the following inequality:

\begin{lemma} \label{lemma:closed point deg leq}
    Let $C$ and $D$ be smooth, projective curves over a number field $k$ and let $f : C \to D$ be a finite locally free map. If $x \in C$ is a closed point, then 
    \[ \deg(x) \leq \deg(f) \deg(f(x)) . \]
\end{lemma}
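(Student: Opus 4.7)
The plan is to unwind the definition of the degree of a closed point in terms of residue field extensions and then use the fact that a finite locally free map has fibers of constant length equal to $\deg(f)$. Set $y \defeq f(x)$ and recall that $\deg(x) = [\kappa(x):k]$ and $\deg(y) = [\kappa(y):k]$, where $\kappa(-)$ denotes the residue field. Since $f$ sends $x$ to $y$, the induced map of local rings gives an inclusion of residue fields $\kappa(y) \hookrightarrow \kappa(x)$, and so by the tower law
\[ \deg(x) = [\kappa(x):k] = [\kappa(x):\kappa(y)] \cdot [\kappa(y):k] = [\kappa(x):\kappa(y)] \cdot \deg(f(x)). \]
It therefore suffices to show $[\kappa(x):\kappa(y)] \leq \deg(f)$.

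For this, I would appeal to the hypothesis that $f$ is finite locally free: the pushforward $f_\ast \mathcal{O}_C$ is a locally free $\mathcal{O}_D$-module whose rank is, by definition, $\deg(f)$. Pulling back to $y$, the scheme-theoretic fiber $f^{-1}(y) = \Spec(f_\ast \mathcal{O}_C \otimes_{\mathcal{O}_D} \kappa(y))$ is a finite $\kappa(y)$-scheme of $\kappa(y)$-dimension exactly $\deg(f)$. The closed point $x$ lies in this fiber, so the local ring $\mathcal{O}_{f^{-1}(y), x}$ is a direct factor of this artinian $\kappa(y)$-algebra, and hence has $\kappa(y)$-dimension at most $\deg(f)$. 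Since $\kappa(x)$ is a quotient of $\mathcal{O}_{f^{-1}(y), x}$, we obtain
\[ [\kappa(x):\kappa(y)] \leq \dim_{\kappa(y)} \mathcal{O}_{f^{-1}(y), x} \leq \deg(f), \]
which combined with the tower law above gives the desired bound $\deg(x) \leq \deg(f) \cdot \deg(f(x))$.

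The only real subtlety is confirming that $\deg(f)$ in the sense used throughout the paper (the degree of the map of curves) agrees with the rank of $f_\ast \mathcal{O}_C$; this is standard for a finite locally free morphism between integral nice curves, since the rank is then computed at the generic point as $[k(C):k(D)]$, which is the usual definition of the degree of a dominant morphism of curves. Once that identification is made, the argument is a short chain of inequalities and no further obstacle arises.
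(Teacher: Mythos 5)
Your proof is correct, but it takes a somewhat different route from the paper. The paper works with divisors: it invokes the pullback and pushforward maps on divisor groups, the formula $\deg(f^*(f(x))) = \sum_{x' \in f^{-1}(f(x))} e_f(x')\deg(x')$, and the identity $f_*f^* = \deg(f)\cdot\mathrm{id}$, citing EGA, and then observes that $\deg(x)$ is a single summand in a sum of nonnegative terms equal to $\deg(f)\deg(f(x))$. You instead argue at the level of the scheme-theoretic fiber: you reduce via the tower law to bounding $[\kappa(x):\kappa(y)]$, then use that $f_*\mathcal{O}_C \otimes_{\mathcal{O}_D}\kappa(y)$ is an artinian $\kappa(y)$-algebra of dimension $\deg(f)$, of which $\mathcal{O}_{f^{-1}(y),x}$ is a direct factor and $\kappa(x)$ a further quotient. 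These are really two presentations of the same count — the ramification-weighted sum $\sum_{x'} e_f(x')[\kappa(x'):\kappa(y)]$ is exactly $\dim_{\kappa(y)}(f_*\mathcal{O}_C\otimes\kappa(y))$ — but your version is more self-contained, avoiding the EGA citations, while the paper's divisor-theoretic phrasing dovetails with the rest of the paper's use of $\DDiv$, $\PPic$, and the Abel map. Your closing remark about reconciling $\deg(f)$ with the rank of $f_*\mathcal{O}_C$ is the right thing to flag, and the reconciliation you sketch (compute the rank at the generic point) is correct.
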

\begin{proof}
    Let $x \in C$ be a closed point. Let $f^* : \DDiv_{D/k} \to \DDiv_{C/k}$ be the pullback map induced by $f$ and let $f_* : \DDiv_{C/k} \to \DDiv_{D/k}$ be the pushforward map induced by $f$. By \cite[Proposition 21.10.4]{Gro67}, we have 
    \[\deg(f^* (f(x))) = \sum_{x' \in f\inv(f(x))} e_f(x')\deg(x'),\] 
    where $e_f(x')$ is the ramification index of $f$ at $x'$. By \cite[Proposition 21.10.18]{Gro67}, we have $f_*(f^*(f(x))) = \deg(f) f(x)$, so 
    \[ \deg(f^*(f(x))) = \deg(f_*(f^*(f(x)))) = \deg(f) \deg(f(x)) . \] 
    This implies
    \[ \deg(x) \leq \sum_{x' \in f\inv(f(x))} e_f(x')\deg(x') = \deg(f) \deg(f(x)) . \qedhere \]
\end{proof}

\par If $f : C \to D$ is a finite map of curves $x \in C$ is an isolated point, it is not true in general that the image $f(x)$ is an isolated point of $D$. However, if the degree is as large as possible---that is, if $\deg(x) = \deg(f)\deg(f(x))$---then we can in fact conclude that $f(x)$ is isolated. This result was first proved for nice curves in {\cite[Theorem 4.3]{Bou19}} and then generalized by Terao to the setting of smooth, projective curves. 

\begin{theorem}[{\cite[Theorem 2.15]{Ter24}}] \label{thm:Bou19 4.3}
    Let $f : C \to D$ be a finite locally free map of smooth, projective curves over a number field $k$. If $x \in C$ and $y \in D$ are closed points such that $y = f(x)$ and $\deg(x) = \deg(y) \deg(f)$, then the following hold:
    \begin{enumerate}
        \item If $x$ is $\PP^1$-isolated, then $y$ is $\PP^1$-isolated.
        \item If $x$ is AV-isolated, then $y$ is AV-isolated.
    \end{enumerate}
    In particular, if $x$ is isolated, then $y$ is also isolated.
\end{theorem}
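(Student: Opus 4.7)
The plan is to prove both implications via their contrapositives, using the structure of pullback under $f$. The foundational observation is that the degree equality $\deg(x) = \deg(f)\deg(y)$ forces $f^*(y) = x$ as divisors on $C$. Indeed, the computation from the proof of Lemma \ref{lemma:closed point deg leq} gives
\[ \deg(f^*(y)) = \sum_{x' \in f^{-1}(y)} e_f(x')\deg(x') = \deg(f)\deg(y), \]
and since $x$ appears as one summand on the left with $\deg(x) = \deg(f)\deg(y)$ and every term $e_f(x')\deg(x')$ is strictly positive, I conclude that $f^{-1}(y) = \{x\}$ set-theoretically with $e_f(x) = 1$, so $f^*(y) = x$ as divisors.

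For part (1), I would assume $y$ is $\PP^1$-parametrized, so there exists $y' \in \DDiv_{D/k}(k)$ with $y' \neq y$ and $\AA_{D/k}(y') = \AA_{D/k}(y)$. Applying $f^*$ and using the functoriality $\AA_{C/k} \circ f^* = f^* \circ \AA_{D/k}$ (pullback preserves linear equivalence), I obtain $\AA_{C/k}(f^*y') = \AA_{C/k}(f^*y) = \AA_{C/k}(x)$. Since $f$ is surjective on closed points (being finite and non-constant between integral curves), $f^*$ is injective on divisor groups, so $f^*y' \neq f^*y = x$. Hence $x$ is $\PP^1$-parametrized.

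For part (2), I would assume $y$ is AV-parametrized by a positive-dimensional abelian subvariety $A \subseteq \PPic_{D/k}^0$, so $\AA_{D/k}(y) + A \subseteq \WW_{D/k}$. The pullback $f^* : \PPic_{D/k}^0 \to \PPic_{C/k}^0$ has finite kernel, since $f_* \circ f^*$ equals multiplication by $\deg(f)$ on $\PPic_{D/k}^0$; consequently $f^*(A)$ is a positive-dimensional abelian subvariety of $\PPic_{C/k}^0$. For any $P \in A$, write $\AA_{D/k}(y) + P = \AA_{D/k}(z)$ for some effective $z \in \DDiv_{D/k}(k)$. Then $f^*z$ is effective, and functoriality yields
\[ \AA_{C/k}(x) + f^*P = \AA_{C/k}(f^*y) + f^*P = \AA_{C/k}(f^*z) \in \WW_{C/k}. \]
Thus $\AA_{C/k}(x) + f^*(A) \subseteq \WW_{C/k}$, and $x$ is AV-parametrized.

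The main technical subtleties, which I would justify via \cite{Gro67}, are the compatibility of pullback with the Abel map and the finiteness of $\ker(f^*)$ on Picard identity components; once these are in hand, the argument is a formal consequence of the divisor identity $f^*(y) = x$.
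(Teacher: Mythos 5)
Your proof is correct and follows the same strategy as the cited source (\cite[Theorem 4.3]{Bou19}, generalized in \cite[Theorem 2.15]{Ter24}): use the degree equality to force $f^*(y) = x$ as divisors, then pull back the parametrizing linear system or abelian variety translate via functoriality of the Abel map. One small slip: in part (2) you conclude $f^*(A)$ is \emph{positive-dimensional}, but what the definition of AV-parametrized requires is \emph{positive rank}; the finiteness of $\ker(f^*)$ that you invoke is precisely what delivers positive rank (it shows $f^*|_{A(k)}$ has finite kernel, so $\operatorname{rank} f^*(A)(k) \geq \operatorname{rank} A(k) > 0$), so the argument is sound once the wording is corrected.
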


\section{Modular curves of prime-power level} \label{sec:prime-power level}

In this section we prove Theorem \ref{thm:intro:fixed prime power finiteness}. 
Our strategy is to bound the level of modular curves on which a non-CM isolated point with $j$-invariant of fixed degree must appear. 

\begin{proposition} \label{prop:fixed prime power deg}
    Fix positive integers $\ell,m \in \NN$ with $\ell$ prime. Let $H \leq \GL_2(\ZZ_\ell)$ be an open subgroup containing $-I$ of level $\ell^n$ with $n \geq m$. Suppose $x \in X_H$ is a non-cuspidal closed point with minimal representative $(E,[\alpha]_H)$ such that $E$ is non-CM and $\rho_{E,\alpha,\ell^\infty}(\Gal_{\QQ(j(E))}) \leq \GL_2(\ZZ_\ell)$ has level dividing $\ell^m$. Let $H' \defeq \pi_{\ell^m}\inv (H(\ell^m))$ and let $f$ denote the natural map $X_H \to X_{H'}$. Then $\deg(x) = \deg(f)\deg(f(x))$.
\end{proposition}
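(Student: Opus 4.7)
The plan is to apply Corollary \ref{cor:deg(x) on X_H} to compute $\deg(x)$ and $\deg(f(x))$, apply Theorem \ref{thm:Dia06 p 66:contain -I} to compute $\deg(f)$, and then reduce the desired identity to a purely group-theoretic statement resolved by the hypothesis that $\rho_{E,\alpha,\ell^\infty}(\Gal_{\QQ(j(E))})$ has level dividing $\ell^m$.

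First, I would pass to the adelic pullbacks $\widetilde{H} \defeq \pi_\ell\inv(H)$ and $\widetilde{H'} \defeq \pi_\ell\inv(H')$ in $\GL_2(\hat{\ZZ})$, so that $X_H = X_{\widetilde H}$ and $X_{H'} = X_{\widetilde{H'}}$ by the definition of the modular curve attached to an $\ell$-adic subgroup. Since $H \leq H'$, the class $[\alpha]_H$ determines a class $[\alpha]_{H'}$, so $(E,[\alpha]_{\widetilde{H'}})$ is a minimal representative for $f(x)$. Writing $G \defeq \rho_{E,\alpha,\ell^\infty}(\Gal_{\QQ(j(E))})$, the fact that $\ker(\pi_\ell \colon \GL_2(\hat{\ZZ}) \to \GL_2(\ZZ_\ell))$ is contained in both $\widetilde H$ and $\widetilde{H'}$ lets me restrict $\pi_\ell$ to $\rho_{E,\alpha}(\Gal_{\QQ(j(E))})$ and obtain
\[ [\rho_{E,\alpha}(\Gal_{\QQ(j(E))}) : \rho_{E,\alpha}(\Gal_{\QQ(j(E))}) \cap \widetilde H] = [G : G \cap H], \]
and analogously for $\widetilde{H'}$. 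Since $E$ is non-CM we have $j(E) \neq 0, 1728$, so Corollary \ref{cor:deg(x) on X_H} gives $\deg(x) = [\QQ(j(E)) : \QQ] \cdot [G : G \cap H]$ and $\deg(f(x)) = [\QQ(j(E)) : \QQ] \cdot [G : G \cap H']$, while Theorem \ref{thm:Dia06 p 66:contain -I} applied to $\widetilde H \leq \widetilde{H'}$ yields $\deg(f) = [H' : H]$. Thus the claim reduces to the group-theoretic identity $[G \cap H' : G \cap H] = [H' : H]$.

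Next, I set $K \defeq \ker\bigl(\pi_{\ell^m} \colon \GL_2(\ZZ_\ell) \to \GL_2(\ZZ/\ell^m\ZZ)\bigr)$. The hypothesis on the level of $G$ is exactly $K \leq G$, while the definition $H' = \pi_{\ell^m}\inv(H(\ell^m))$, together with $H(\ell^m) = \pi_{\ell^m}(H)$, implies that every $h' \in H'$ can be written $h' = h \cdot k$ with $h \in H$, $k \in K$, so $H' = H \cdot K$. Because $K \trianglelefteq \GL_2(\ZZ_\ell)$ is contained in $G$, I obtain $G \cap H' = G \cap (HK) = (G \cap H) \cdot K$, and two applications of the second isomorphism theorem give
\[ [H' : H] = [HK : H] = [K : K \cap H] = [(G \cap H)K : G \cap H] = [G \cap H' : G \cap H], \]
which is the desired identity. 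The main (and essentially only) obstacle is the translation from the adelic form of Corollary \ref{cor:deg(x) on X_H} to the $\ell$-adic formulation above; once this bookkeeping is in place, the argument is a short, formal manipulation exploiting that $H' = HK$ and that $K$ lies in $G$.
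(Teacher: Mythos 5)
Your proof is correct and rests on the same central observation as the paper's: writing $K \defeq \ker\pi_{\ell^m}$, one has $H' = HK$ with $K \leq G$, which forces the two indices to agree. The execution differs in a way worth noting. The paper first invokes Lemma \ref{lemma:closed point deg leq} to obtain $\deg(x) \leq \deg(f)\deg(f(x))$ and then establishes only the reverse inequality $[\,\im\rho_{E,\alpha}\cap\pi_{\ell^\infty}^{-1}(H'):\im\rho_{E,\alpha}\cap\pi_{\ell^\infty}^{-1}(H)\,]\ge[\,R\cap H':R\cap H\,]$ via Lemma \enumref{lemma:H' = HK and surjective image index}{lemma:surjective image index}, finishing with a set-theoretic count involving the (not necessarily group) double coset set $HR$. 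You instead observe that because $\ker\pi_\ell$ lies in both adelic pullbacks, the correspondence theorem gives the exact equality $[\,\rho_{E,\alpha}(\Gal_{\QQ(j(E))}):\rho_{E,\alpha}(\Gal_{\QQ(j(E))})\cap\widetilde{H}\,]=[G:G\cap H]$ (and likewise for $H'$), so the whole statement reduces directly to $[G\cap H':G\cap H]=[H':H]$, which you then settle with the Dedekind modular law $G\cap HK=(G\cap H)K$ and two applications of the diamond isomorphism. This is a modest streamlining: it bypasses the geometric inequality of Lemma \ref{lemma:closed point deg leq} entirely and keeps every step an equality, while the paper trades cleanliness here for re-use of that lemma elsewhere. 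Both arguments are valid; yours is a bit more self-contained at the group-theoretic level, and it would be worth making explicit in a write-up that $K$ normal in $\GL_2(\ZZ_\ell)$ is what makes $HK$ and $(G\cap H)K$ subgroups, and that $K\le G$ is what lets you replace $K\cap(G\cap H)$ by $K\cap H$.
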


\subsection{Preliminary results}
We shall need the following lemma.

\begin{lemma} \label{lemma:H' = HK and surjective image index}
    Let $H$ be a subgroup of a group $G$ and let $f : G \to G'$ be a surjective homomorphism of groups.
    \begin{enumerate}
        \item If $H$ has finite index in $G$, then $[G : H] \geq [f(G) : f(H)]$. \label{lemma:surjective image index}
        \item If $H' \defeq f\inv(f(H))$ and $K \defeq \ker(f)$, then $H' = HK$. \label{lemma:H' = HK}
    \end{enumerate}
\end{lemma}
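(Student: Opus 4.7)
The plan is to prove both parts by direct coset manipulations; no deep input is needed.

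For part (1), I would define a map of left coset spaces
\[ \phi \colon G/H \longrightarrow f(G)/f(H), \qquad gH \longmapsto f(g)\, f(H). \]
Well-definedness reduces to the observation that $g^{-1}g' \in H$ forces $f(g)^{-1}f(g') \in f(H)$, and surjectivity is immediate from the surjectivity of $f$ onto $f(G)$. Since $[G:H]$ is assumed finite, the existence of a surjection from $G/H$ onto $f(G)/f(H)$ forces the cardinality inequality $[G:H] \geq [f(G):f(H)]$.

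For part (2), I would prove $H' = HK$ by double containment. The inclusion $HK \subseteq H'$ is immediate: for $h \in H$ and $k \in K$, one computes $f(hk) = f(h)f(k) = f(h) \in f(H)$, so $hk \in f^{-1}(f(H)) = H'$. For the reverse inclusion, given $g \in H'$ choose $h \in H$ with $f(g) = f(h)$, which exists by the definition of $H'$; then $h^{-1}g \in \ker f = K$, so setting $k \defeq h^{-1}g$ yields $g = hk \in HK$.

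There is no genuine obstacle; both assertions are standard coset-theoretic facts, and I expect the write-up to occupy only a few lines. The only subtlety worth flagging is that the argument for part (1) actually produces a surjection $G/H \to f(G)/f(H)$ without any finiteness hypothesis, so the finite-index assumption is used solely to interpret the conclusion as an inequality of positive integers, which is the form needed in the subsequent application.
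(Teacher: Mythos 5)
Your proposal is correct and follows essentially the same approach as the paper: both parts proceed by the identical coset map $gH \mapsto f(g)f(H)$ for the inequality, and by the identical double-containment argument for $H' = HK$. The only cosmetic difference is the side on which you cancel ($h^{-1}g$ versus $gh^{-1}$ landing in the kernel), which is immaterial.
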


\begin{proof}\ 
    \begin{enumerate}
        \item Let $G/H$ denote the set of all cosets of $H$ in $G$. Define a function $\phi : G/H \to f(G)/f(H)$ by $xH \mapto f(x)f(H)$. First, we show that $\phi$ is well-defined. Suppose $xH = yH$. Then $y\inv x \in H$, so $f(y\inv x) \in f(H)$. Then 
        \[ f(H) = f(y\inv x)f(H) = f(y)\inv f(x) f(H) \,, \]
        which implies 
        \[ \phi(xH) = f(x)f(H) = f(y)f(H) = \phi(yH) . \]
        Thus, $\phi$ is well-defined. Moreover, $\phi$ is surjective because $f$ is surjective. Hence, we conclude
        \[ [G : H] = \#(G/H) \geq \#(f(G)/f(H)) = [f(G) : f(H)] . \]
        \item First, we show $HK \subseteq H'$. Let $x \in HK$ be given and suppose $x = hk$ for some $h \in H$ and $k \in K$. Then 
        \[f(x) = f(hk) = f(h)f(k) = f(h) \in f(H). \]
        Thus, $x \in H'$.
        
        \par Now we show $H' \subseteq HK$. Let $x \in H'$ be given. Then $f(x) \in f(H)$, so there exists $h \in H$ with $f(x) = f(h)$. Notice 
        \[ 1 = f(x) f(h)\inv = f(x h\inv)\,, \]
        so $xh\inv \in K$. Thus, $x \in KH = HK$. \qedhere
    \end{enumerate}
\end{proof}

\subsection{Proof of Proposition \ref{prop:fixed prime power deg}}

By Lemma \ref{lemma:closed point deg leq}, we know $\deg(x) \leq \deg(f) \deg(f(x))$, so it suffices to show 
\begin{equation}
    \frac{\deg(x)}{\deg(f(x))} \geq \deg(f) . \label{eqn:fixed prime power prop}
\end{equation}
Let $\pi_{\ell^\infty}$ denote the projection 
\[ \GL_2(\hat{\ZZ}) \xra{\simeq} \prod_{p} \GL_2(\ZZ_p) \to \GL_2(\ZZ_\ell) .\]
Since $H$ contains $-I$, Theorem \ref{thm:Dia06 p 66:contain -I} implies 
$\deg(f) = [\pi_{\ell^\infty}\inv(H') : \pi_{\ell^\infty}\inv(H)] = [H' : H]$.
Then
\begin{align*}
    \frac{\deg(x)}{\deg(f(x))}
    &= \frac{[\QQ(j(E)) : \QQ][\im \rho_{E,\alpha} : \im \rho_{E,\alpha} \cap \pi_{\ell^\infty}\inv (H)]}{[\QQ(j(E)) : \QQ][\im \rho_{E,\alpha} : \im \rho_{E,\alpha} \cap \pi_{\ell^\infty}\inv(H')]} & \text{(By Corollary \ref{cor:deg(x) on X_H})}
    \\ &= [\im \rho_{E,\alpha} \cap \pi_{\ell^\infty}\inv(H') : \im \rho_{E,\alpha} \cap \pi_{\ell^\infty}\inv(H)] 
    \\ &\geq [\pi_{\ell^\infty}(\im \rho_{E,\alpha} \cap \pi_{\ell^\infty}\inv(H') ): \pi_{\ell^\infty}(\im \rho_{E,\alpha} \cap \pi_{\ell^\infty}\inv(H))] & \text{(By Lemma 
    \enumref{lemma:surjective image index}{lemma:H' = HK and surjective image index})}
    \\ &= [\im \rho_{E,\alpha, \ell^\infty} \cap H' : \im \rho_{E,\alpha, \ell^\infty} \cap H] . 
\end{align*}

\par Denote $R \defeq \rho_{E,\alpha,\ell^\infty}(\Gal_{\QQ(j(E))})$ and consider the following subset lattice: 
\[\begin{tikzcd}[ampersand replacement=\&,sep=small]
    \& {R \cap H} \\
    H \& {R \cap H'} \\
    {H'} \&\& R \\
    \& HR \\
    \& {H'R} \\
    \& {\GL_2(\mathbf{Z}_\ell)}
    \arrow[no head, from=2-1, to=1-2]
    \arrow[no head, from=2-1, to=3-1]
    \arrow[no head, from=2-2, to=1-2]
    \arrow[no head, from=3-1, to=2-2]
    \arrow[no head, from=3-3, to=1-2]
    \arrow[no head, from=3-3, to=2-2]
    \arrow[no head, from=3-3, to=4-2]
    \arrow[no head, from=4-2, to=2-1]
    \arrow[no head, from=5-2, to=3-1]
    \arrow[no head, from=5-2, to=3-3]
    \arrow[no head, from=5-2, to=4-2]
    \arrow[no head, from=5-2, to=6-2]
\end{tikzcd}\]
The set $HR$ need not be a group. However, $HR$ is a union of left cosets of $R$, the number of which we denote by $[HR : R]$. 
By assumption, $R$ and $H$ both have finite index in $\GL_2(\ZZ_\ell)$, so all subgroups in the above lattice have finite index. 
The map $H/(R \cap H) \to HR/R$ defined by ${h(R \cap H) \mapto hR}$ is a well-defined bijection of sets, so $[H : R \cap H] = [HR : R]$. Similarly, we have ${[H' : R \cap H'] = [H'R : R]}$. 

\par
Notice that Lemma 
\enumref{lemma:H' = HK and surjective image index}{lemma:H' = HK}
implies $H' = H (\ker \pi_{\ell^m})$. 
Moreover, $R$ contains $\ker \pi_{\ell^m}$ because $R$ has level dividing $\ell^m$. This implies $H'R = H(\ker \pi_{\ell^m}) R = HR$. 
Then,
\begin{align*}
    [R \cap H' : R \cap H]
    &= [H' : H] \frac{[H : R \cap H]}{[H' : R \cap H']}
    \\ &= [H' : H] \frac{[HR : R]}{[H'R : R]}
    \\ &= [H' : H] \frac{[HR : R]}{[HR : R]}
    \\ &= [H' : H] .
\end{align*}
Thus, we have verified inequality (\ref{eqn:fixed prime power prop}).

\subsection{Proof of Theorem \ref{thm:intro:fixed prime power finiteness}}


The proof of Theorem \ref{thm:intro:fixed prime power finiteness} relies on a result of Cadoret and Tamagawa \cite[Theorem 1.1]{Cad13}, which, for a fixed prime $\ell$ and a fixed positive integer $d$, gives a uniform bound on the level of $\im \rho_{E, \ell^\infty}$, 
as $E$ varies over all non-CM elliptic curves defined over number fields of degree $d$.

\begin{theorem} \label{thm:fixed prime power finiteness}
    Fix a prime $\ell \in \NN$ and define
    \[ \cI \defeq \set{x \in X_H}{x \text{ is isolated and } H \leq \GL_2(\ZZ_\ell) \text{ an open subgroup}}. \]
    For every $d \in \NN$, there are finitely many $j$-invariants of degree $d$ in $j(\cI)$.
\end{theorem}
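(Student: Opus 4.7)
The plan is to separate the CM and non-CM cases, reduce the non-CM case to a bounded-level situation via Cadoret-Tamagawa and Proposition \ref{prop:fixed prime power deg}, and then invoke finiteness of isolated points on each of finitely many modular curves of bounded level.

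First, by Theorem \ref{thm:finitely many CM j-invariants in degree d number fields} there are only finitely many CM $j$-invariants lying in number fields of degree $d$, so it suffices to bound the non-CM contribution to $j(\cI)$. Next, for any open subgroup $H \leq \GL_2(\ZZ_\ell)$ the natural isomorphism $X_H \cong X_{\pm H}$ respects $j$-invariants and the property of being isolated, so we may assume $-I \in H$ throughout, and in particular that every non-cuspidal non-CM closed point of $X_H$ admits a minimal representative $(E, [\alpha]_H)$ defined over $\QQ(j(E))$.

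Now fix a non-CM $j$-invariant $j(x) \in j(\cI)$ with $[\QQ(j(x)) : \QQ] = d$, and let $(E, [\alpha]_H)$ be a minimal representative for $x$, defined over $k \defeq \QQ(j(E)) = \QQ(j(x))$. By the main theorem of Cadoret-Tamagawa \cite[Theorem 1.1]{Cad13}, applied to the fixed prime $\ell$ and the fixed degree $d$, there exists an integer $m = m(\ell, d)$, independent of $E$, such that the level of $\rho_{E, \alpha, \ell^\infty}(\Gal_k) \leq \GL_2(\ZZ_\ell)$ divides $\ell^m$. Replacing $H$ by $\pi_n\inv(H(n))$ for $n$ the level of $H$ if necessary, we may assume $H$ itself has level $\ell^n$ for some $n$; if $n \leq m$ we are already done, and otherwise we set $H' \defeq \pi_{\ell^m}\inv(H(\ell^m))$ and let $f : X_H \to X_{H'}$ be the induced map. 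Proposition \ref{prop:fixed prime power deg} yields $\deg(x) = \deg(f)\deg(f(x))$, and therefore Theorem \ref{thm:Bou19 4.3} implies that $f(x)$ is an isolated closed point of $X_{H'}$. Since $f$ lies over the $j$-line, $j(f(x)) = j(x)$.

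Thus every non-CM $j$-invariant of degree $d$ in $j(\cI)$ already appears as the $j$-invariant of an isolated point on some $X_{H'}$ with $H' \leq \GL_2(\ZZ_\ell)$ open of level dividing $\ell^m$. Such $H'$ are in bijection with subgroups of the finite group $\GL_2(\ZZ/\ell^m\ZZ)$, so there are only finitely many of them; and by \cite[Theorem 4.2]{Bou19} each modular curve $X_{H'}$ carries only finitely many isolated points. Taking the union of the (finite) $j$-invariant sets over this finite collection of curves gives the desired finiteness.

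The main technical step is the reduction-of-level step, and this is precisely what Proposition \ref{prop:fixed prime power deg} combined with Theorem \ref{thm:Bou19 4.3} accomplishes: once the level of $\rho_{E,\ell^\infty}(\Gal_k)$ is forced to divide $\ell^m$ uniformly in $E$ by Cadoret-Tamagawa, the degree-multiplicativity in Proposition \ref{prop:fixed prime power deg} is exactly the hypothesis needed to push the isolatedness of $x$ down to $f(x)$, and everything else is finiteness bookkeeping.
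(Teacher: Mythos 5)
Your proof is correct and follows essentially the same route as the paper: reduce to the non-CM case via Theorem \ref{thm:finitely many CM j-invariants in degree d number fields}, invoke Cadoret--Tamagawa to get the uniform level bound $\ell^m$, push the isolated point down to $X_{H'}$ via Proposition \ref{prop:fixed prime power deg} and Theorem \ref{thm:Bou19 4.3}, and finish with the finiteness of isolated points on each of the finitely many curves of level dividing $\ell^m$. One small wording oddity — ``Replacing $H$ by $\pi_n\inv(H(n))$\dots we may assume $H$ itself has level $\ell^n$'' — is unnecessary, since any open $H \leq \GL_2(\ZZ_\ell)$ automatically has level a power of $\ell$, but this does not affect the argument.
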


\begin{proof}
    Let $x \in \cI$ be given. Then $x \in X_H$ is isolated for some subgroup $H \leq \GL_2(\ZZ_\ell)$ of level $\ell^n$. Since $X_H$ and $X_{\pm H}$ are isomorphic as curves, we can assume $H$ contains $-I$. Define $k \defeq \QQ(j(x))$ and $d \defeq [k : \QQ]$.  Let $(E, [\alpha]_H)$ be a minimal representative for $x$. By Theorem \ref{thm:finitely many CM j-invariants in degree d number fields}, there are only finitely many CM $j$-invariants in number fields of degree $d$. As such, it suffices to consider the case where $E$ is non-CM. 
    
    \par By \cite[Theorem 1.1]{Cad13}, there exists a number $m\in \NN$ such that for every degree $d$ number field $k'$ and every non-CM elliptic curve $E'/k'$, the image of the $\ell$-adic Galois representation associated to $E'/k'$, 
    \[ \rho_{E',\ell^\infty} : \Gal_{k'} \to \Aut(T_\ell(E)) \simeq \GL_2(\ZZ_\ell) \]
    has level dividing $\ell^m$. 
    
    \par For each $n' \leq m$, there exist finitely many subgroups of $\GL_2(\ZZ/\ell^{n'}\ZZ)$ and thus finitely many modular curves of level $\ell^{n'}$, each of which has finitely many isolated points by {\cite[Theorem 4.2]{Bou19}}. So, we may assume $n > m$. 
    
    \par Let $H' \leq \GL_2(\ZZ_\ell)$ be the subgroup $\pi_{\ell^m}\inv (H(\ell^m))$ and let $f : X_H \to X_{H'}$ denote the natural inclusion map. By Proposition \ref{prop:fixed prime power deg},
    \[ \deg(x) = \deg(f) \deg(f(x)) . \]
    Then Theorem \ref{thm:Bou19 4.3} implies  $f(x)$ is isolated. 
    
    \par Thus, $j(x)$ is the $j$-invariant of an isolated point on a modular curve of level dividing $\ell^m$. There are only finitely many such modular curves and each has finitely many isolated points. Since $m$ was dependent only on $d$, we conclude $j(\cI)$ contains finitely many $j$-invariants of degree $d$. 
\end{proof}



It is natural to ask if Theorem \ref{thm:fixed prime power finiteness} holds as $\ell$ varies over all primes. 
This is in fact a stronger claim than an affirmative answer to Serre's uniformity problem. 
Indeed, suppose the set $\cJ$ of rational isolated $j$-invariants on modular curves of prime-power level is finite. In particular, this implies there are only finitely many rational $j$-invariants corresponding to isolated points on $X_{\Cns(\ell)}$, as $\ell$ ranges over all prime numbers. For each non-CM $j \in \cJ$, there exists a non-CM elliptic curve $E/\QQ$ with $j(E) = j$. Serre's open image theorem \cite[Th\'eor\`eme 3]{Ser72} ensures that for sufficiently large primes $\ell$, the mod-$\ell$ representation of Galois associated to $E$ is surjective. So, there are only finitely-many primes $\ell$ with $\rho_{E,\ell}(\Gal_\QQ)$ conjugate to a subgroup of $\Cns(\ell)$. This implies that $j$ arises as an isolated point on only finitely many modular curves $X_{\Cns(\ell)}$, each of which has finitely many isolated points. 

\section{Intermediate modular curves} \label{sec:IMC}
Let $n$ be a positive integer and let $\Delta$ be a subgroup of $(\ZZ/n\ZZ)^\times$.
We define a subgroup of $\GL_2(\ZZ/n\ZZ)$ as follows:
\[ B_\Delta(n) \defeq \set{\twomat{\delta}{a}{0}{b}}{\delta \in \Delta ,\ a \in \ZZ/n\ZZ,\  b \in (\ZZ/n\ZZ)^\times}.  \]
Moreover, we denote $B_1(n)$ (resp. $B_{\pm 1}(n)$; $B_0(n)$) to be $B_\Delta(n)$, with $\Delta = \{\,1\,\}$ (resp. $\Delta = \{\, \pm 1 \,\}$; $\Delta = (\ZZ/n\ZZ)^\times$). 

We will make frequent use of the following formulae. 

\begin{proposition} \label{prop:borel formulae}
    Let $n \in \NN$ have prime factorization $p_1^{a_1} \cdots p_r^{a_r}$ and let $\Delta$ be a subgroup of $(\ZZ/n\ZZ)^\times$. 
    \begin{enumerate}
        \item $\#\GL_2(\ZZ/n\ZZ) = \prod_{i=1}^r p_i^{4a_i-3} \p{p_i^2 - 1}\p{p_i - 1}$. \label{prop:borel formulae:gl2}
        \item $\# B_\Delta(n) = (\#\Delta) \prod_{i=1}^r p_i^{2a_i - 1} \p{p_i - 1}$. \label{prop:borel formulae:B Delta}
        \item $[\GL_2(\ZZ/n\ZZ) : B_\Delta(n)] = \frac{1}{\#\Delta} \prod_{i=1}^r p_i^{2a_i-2} \p{p_i^2 - 1}$. \label{prop:borel formulae:index B Delta}
    \end{enumerate}
\end{proposition}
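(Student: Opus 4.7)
The plan is to reduce everything to a Chinese Remainder Theorem decomposition and then do a direct count at prime powers. Concretely, I would first invoke the CRT isomorphism $\ZZ/n\ZZ \simeq \prod_{i=1}^r \ZZ/p_i^{a_i}\ZZ$ of rings, which yields $\GL_2(\ZZ/n\ZZ) \simeq \prod_i \GL_2(\ZZ/p_i^{a_i}\ZZ)$. So for part \enumref{prop:borel formulae}{prop:borel formulae:gl2}, it suffices to compute $\#\GL_2(\ZZ/p^a\ZZ)$ for each prime $p$ and each $a \geq 1$, and multiply.

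For the prime power count, I would use the reduction homomorphism $\GL_2(\ZZ/p^a\ZZ) \to \GL_2(\mathbf{F}_p)$: it is surjective (lift any invertible $\mathbf{F}_p$-matrix, then its determinant is a unit mod $p^a$ by Hensel/units lifting), and its kernel is $I + p \cdot M_2(\ZZ/p^{a-1}\ZZ)$, which has size $p^{4(a-1)}$. Combined with the standard formula $\#\GL_2(\mathbf{F}_p) = (p^2-1)(p^2-p) = p(p-1)(p^2-1)$, this gives $\#\GL_2(\ZZ/p^a\ZZ) = p^{4a-3}(p-1)(p^2-1)$, and taking the product over the $p_i$ proves \enumref{prop:borel formulae}{prop:borel formulae:gl2}.

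For part \enumref{prop:borel formulae}{prop:borel formulae:B Delta}, I do not need to CRT-decompose $\Delta$ itself (it need not split as a product of subgroups of the factors). Instead I count directly from the definition: a matrix of $B_\Delta(n)$ is determined by independently choosing $\delta \in \Delta$, $a \in \ZZ/n\ZZ$, and $b \in (\ZZ/n\ZZ)^\times$, giving
\[
\#B_\Delta(n) = (\#\Delta)\cdot n \cdot \phi(n).
\]
Using the multiplicativity of $\phi$, we have $n\cdot\phi(n) = \prod_i p_i^{a_i} \cdot p_i^{a_i-1}(p_i-1) = \prod_i p_i^{2a_i-1}(p_i-1)$, which yields the stated formula.

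Finally, part \enumref{prop:borel formulae}{prop:borel formulae:index B Delta} is immediate by dividing:
\[
[\GL_2(\ZZ/n\ZZ):B_\Delta(n)] = \frac{\prod_i p_i^{4a_i-3}(p_i^2-1)(p_i-1)}{(\#\Delta)\prod_i p_i^{2a_i-1}(p_i-1)} = \frac{1}{\#\Delta}\prod_i p_i^{2a_i-2}(p_i^2-1).
\]
There is no real obstacle here; the only mild subtlety is checking that the kernel of $\GL_2(\ZZ/p^a\ZZ) \to \GL_2(\mathbf{F}_p)$ has the expected size and that the reduction map is surjective, both of which are standard.
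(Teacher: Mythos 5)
Your proof is correct. The one place you genuinely diverge from the paper is part \enumref{prop:borel formulae}{prop:borel formulae:gl2}: you use CRT to split $\GL_2(\ZZ/n\ZZ)$ into a product over prime powers and then count each factor via the surjective reduction map onto $\GL_2(\mathbf{F}_{p})$ with kernel $I + p\,M_2(\ZZ/p^{a-1}\ZZ)$ of size $p^{4(a-1)}$, whereas the paper reads the count off the short exact sequence $1 \to \SL_2(\ZZ/n\ZZ) \to \GL_2(\ZZ/n\ZZ) \xrightarrow{\det} (\ZZ/n\ZZ)^\times \to 1$, so that $\#\GL_2(\ZZ/n\ZZ) = \phi(n)\cdot\#\SL_2(\ZZ/n\ZZ)$ and the standard formula for $\#\SL_2(\ZZ/n\ZZ)$ finishes the job. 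Your route is somewhat more self-contained, since it bottoms out at $\#\GL_2(\mathbf{F}_p)$ and a kernel count rather than citing the order of $\SL_2(\ZZ/n\ZZ)$; the paper's route is shorter if one takes that order as known. For parts \enumref{prop:borel formulae}{prop:borel formulae:B Delta} and \enumref{prop:borel formulae}{prop:borel formulae:index B Delta} you and the paper do essentially the same thing: count $B_\Delta(n)$ directly from the definition as $(\#\Delta)\cdot n\cdot\phi(n)$, and divide. Your remark that $\Delta$ need not factor through CRT, so one should count $B_\Delta(n)$ directly rather than componentwise, is a good observation and worth retaining.
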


\begin{proof}
    Formula (\ref{prop:borel formulae:gl2}) follows from the short exact sequence
    \[ 1 \to \SL_2(\ZZ/n\ZZ) \xmra{} \GL_2(\ZZ/n\ZZ) \xera{\det} (\ZZ/n\ZZ)^\times \to 1 ,\]
    formula (\ref{prop:borel formulae:B Delta}) follows from the definition of $B_\Delta(n)$, and formula (\ref{prop:borel formulae:index B Delta}) follows from Lagrange's theorem. 
\end{proof}

\par
It will occasionally be useful to rewrite Proposition \enumref{prop:borel formulae}{prop:borel formulae:index B Delta} as
\[ [\GL_2(\ZZ/n\ZZ) : B_\Delta(n)] = \frac{\phi(n)}{\#\Delta} [\GL_2(\ZZ/n\ZZ) : B_0(n)] . \]

Let $n$ be a positive integer and let $\Delta$ be a subgroup of $(\ZZ/n\ZZ)^\times$ be given. We define the \emph{modular curve $X_\Delta(n)$} to be the modular curve $X_{B_\Delta(n)}$. If $\Delta$ is such that $\{\,\pm 1 \,\} \subsetneq \Delta \subsetneq (\ZZ/n\ZZ)^\times$, we call $X_\Delta(n)$ an \emph{intermediate modular curve}, since it is between $X_1(n)$ and $X_0(n)$. 
If $\Delta$ is a subgroup of $(\ZZ/n\ZZ)^\times$ and $m$ divides $n$, then we may write $X_\Delta(m)$ to mean the modular curve $X_{B_\Delta(n)}(m)$, with the understanding that we reduce $\Delta$ modulo $m$. 

\par
We briefly describe another construction of $X_\Delta(n)$.
Define a congruence subgroup associated to $\Delta$ as follows:
\[\Gamma_\Delta(n) \defeq \set{\twomat{a}{b}{c}{d} \in \SL_2(\ZZ)}{c \equiv 0 \mod{n}\,,\ (a \mod{n}) \in \Delta}.\]
The group $\Gamma_\Delta(n)$ inherits from $\SL_2(\ZZ)$ the action on the upper half plane $\HH$ by linear fractional transformations. This action can be extended to ${\HH^* \defeq \HH \cup \PP^1(\QQ)}$, the extended upper half plane, and the quotient space $\HH^*/\Gamma_\Delta(n)$ is a Riemann surface. There exists a smooth projective curve $X(\Gamma_\Delta(n))/\QQ$ and a complex analytic isomorphism ${\HH^*/\Gamma_\Delta(n) \to X(\Gamma_\Delta(n))(\CC)}$ (c.f. \cite[Remark C.13.2]{Sil09}).
Moreover, the curves $X(\Gamma_\Delta(n))$ and $X_\Delta(n)$ are isomorphic over $\QQ$.
%
Note that $\HH^*$ is fixed under the action by $-I \in \SL_2(\ZZ)$, so for any subgroup $\Delta < (\ZZ/n\ZZ)^\times$, the curves $X_{\Delta}(n)$ and $X_{\pm \Delta}(n)$ are isomorphic. 
For this reason, in this paper we only consider subgroups $\Delta$ containing $-1$. 

\par 
For a point $P$ of order $n$ on an elliptic curve $E$, we 
define $\Delta P$ to be the set $\set{\delta P}{\delta \in \Delta}$ and $\QQ(\Delta P)$ to be the fixed field of $\set{\sigma \in \Gal_{\QQ}}{\sigma(\Delta P) = \Delta P}$. 
Then the non-cuspidal points of $X_\Delta(n)(\overline{\QQ})$ parametrize equivalence classes of pairs $(E, \Delta P)$, where $E$ is an elliptic curve and $P \in E$ is a point of order $n$. 
Moreover, with this construction, every non-cuspidal $k$-rational point of $X_\Delta(n)$ is of the form $[(E, \Delta P)]$, with $E/k$ an elliptic curve and $P \in E(k)$ a point of order $n$ such that $\Delta P$ is fixed by $\Gal_k$.

\begin{lemma} \label{lemma:deg(x) for x in X Delta}
    Let $n \in \NN$ and $\{\, \pm 1 \,\} \leq \Delta \leq (\ZZ/n\ZZ)^\times$ be given. Let $E/\QQ(j(E))$ be a non-CM elliptic curve and let $P \in E$ be a point of order $n$. 
    If $x \in X_\Delta(n)$ is the closed point associated to $(E,\Delta P)$, then 
    \[ \deg(x) = [\QQ(j(E), \Delta P):\QQ] . \]
\end{lemma}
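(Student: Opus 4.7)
The plan is to apply Corollary \ref{cor:deg(x) on X_H} and translate the Borel-type condition defining $B_\Delta(n)$ into a condition on which elements of $\Gal_k$ fix the set $\Delta P$. Set $k \defeq \QQ(j(E))$ and $H \defeq \pi_n\inv(B_\Delta(n)) \leq \GL_2(\hat{\ZZ})$, which contains $-I$ since $-1 \in \Delta$. Because $E$ is non-CM, $j(E) \neq 0, 1728$, so Corollary \ref{cor:deg(x) on X_H} applies to the minimal representative $(E, [\alpha]_H)$ and gives
\[
\deg(x) = [\QQ(j(E)) : \QQ]\,[\rho_{E,\alpha}(\Gal_k) : \rho_{E,\alpha}(\Gal_k) \cap H].
\]

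Next, I would fix the trivialization $\alpha : T(E) \xra{\simeq} \hat{\ZZ} \times \hat{\ZZ}$ so that the reduction $\alpha \bmod n$ sends $P$ to the first standard basis vector $e_1 = (1,0)^\top$; this is possible because $P$ has exact order $n$, so I may extend $P$ to a basis $\{P, Q\}$ of $E[n]$ and then lift to a compatible basis of each Tate module $T_\ell(E)$. For any $\sigma \in \Gal_k$, writing the reduction of $\rho_{E,\alpha}(\sigma)$ modulo $n$ as $\bigl(\begin{smallmatrix} a & b \\ c & d \end{smallmatrix}\bigr)$, we have $\sigma(P) = aP + cQ$. From the explicit description of $B_\Delta(n)$, this matrix lies in $B_\Delta(n)$ precisely when $c = 0$ and $a \in \Delta$, equivalently when $\sigma(P) \in \Delta P$. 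Because $-1 \in \Delta$ gives $-\Delta P = \Delta P$, the condition $\sigma(P) \in \Delta P$ is the same as $\sigma(\Delta P) = \Delta P$. Thus $\rho_{E,\alpha}\inv(H) = \{\,\sigma \in \Gal_k : \sigma(\Delta P) = \Delta P\,\}$, whose fixed field in $\overline{k}$ is $\QQ(j(E), \Delta P)$ by definition.

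Finally, the first isomorphism theorem combined with the Galois correspondence gives
\[
[\rho_{E,\alpha}(\Gal_k) : \rho_{E,\alpha}(\Gal_k) \cap H] = [\Gal_k : \rho_{E,\alpha}\inv(H)] = [\QQ(j(E), \Delta P) : \QQ(j(E))],
\]
and substituting this into the displayed equation above yields the claimed equality $\deg(x) = [\QQ(j(E), \Delta P) : \QQ]$. The main bookkeeping obstacle is verifying that the normalized $\alpha$ actually represents the $B_\Delta(n)$-level structure determined by the orbit $\Delta P$, i.e., that $(E, [\alpha]_H)$ truly is a minimal representative for $x$. This is immediate once one observes that left-multiplying $\alpha$ by $h \in B_\Delta(n)$ sends $e_1$ to a vector of the form $(\delta, 0)^\top$ with $\delta \in \Delta$, so the $H$-orbit of $\alpha(P)$ is exactly $\Delta \cdot e_1$, which under $\alpha\inv$ corresponds precisely to $\Delta P$.
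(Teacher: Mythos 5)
Your proof is correct, but it takes a genuinely different route from the paper's. You invoke Corollary \ref{cor:deg(x) on X_H} (which comes from Terao's degree formula, Theorem \ref{thm:Ter24 4.24}), normalize the level structure $\alpha$ so that $\alpha(P) \equiv e_1 \pmod{n}$, and then translate membership in $B_\Delta(n)$ into the Galois condition $\sigma(\Delta P) = \Delta P$; the degree identity then falls out of the Galois correspondence. The paper instead argues directly from the moduli description: it computes the stabilizer $\cS \leq \Gal_\QQ$ of the point $[(E, \Delta P)] \in Y_H(\overline{\QQ})$ under the Galois action, identifies $\cS = \Gal_{\QQ(j(E))} \cap \Gal_{\QQ(\Delta P)}$ using that $E$ is non-CM and defined over $\QQ(j(E))$, and concludes by orbit-stabilizer and Galois theory. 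The paper's argument is lighter, requiring no explicit appeal to the degree formula and no bookkeeping about the chosen trivialization $\alpha$, whereas your argument is more systematic and illustrates explicitly how the abstract moduli formula of Corollary \ref{cor:deg(x) on X_H} specializes to the classical description of $X_\Delta(n)$ in terms of fields $\QQ(\Delta P)$. One small point: the final paragraph of your argument, where you check that your normalized $\alpha$ really does produce a minimal representative for $x$, is exactly the right thing to verify and is handled correctly, but it deserves the emphasis you give it --- without it, Corollary \ref{cor:deg(x) on X_H} would not apply to your chosen $\alpha$.
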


\begin{proof}
    Let $\cS \leq \Gal_\QQ$ denote the stabilizer of the action of $\Gal_\QQ$ on $[(E,\Delta P)]$. 
    Since $E$ is defined over $\QQ(j(E))$, we have $\sigma E = E$ for all $\sigma \in \Gal_{\QQ(j(E))}$. Moreover, if $\sigma \in \Gal_\QQ$ fixes $E$, then there exists an isomorphism $\phi : E \to \sigma E$ defined over $\overline{\QQ}$. This implies $j(E) = j(\sigma E)$, so $\sigma \in \Gal_{\QQ(j(E))}$. 
    We then have 
    \begin{align*}
        \cS 
        &= \set{\sigma \in \Gal_\QQ}{\sigma[(E, \Delta P)] = [(E, \Delta P)]} 
        \\ &= \set{\sigma \in \Gal_\QQ}{\sigma E = E} \cap \set{\sigma \in \Gal_\QQ}{\sigma(\Delta P) = \Delta P} 
        \\ &= \Gal_{\QQ(j(E))} \cap \Gal_{(\QQ(\Delta P))} .
    \end{align*}
    By the orbit-stabilizer theorem and Galois theory, we conclude
    \[ [\QQ(x) : \QQ] = [\Gal_\QQ : \cS] = [\QQ^{\cS} : \QQ] = [\QQ(j(E))\QQ(\Delta P) : \QQ] = [\QQ(j(E), \Delta P):\QQ] . \qedhere \] 
\end{proof}

\section{Intermediate modular curves of prime-power level} \label{sec:IMC prime-power level}

\subsection{Rational isolated \texorpdfstring{$j$}{j}-invariants on \texorpdfstring{$X_\Delta(\ell^n)$}{X\_Delta(ell\string^n)}}

In this section, we consider the rational $j$-invariants that arise from isolated points on intermediate modular curves of prime-power level. 
Ejder \cite{Ejd22} proved that there are finitely many rational $j$-invariants arising from isolated points on $X_1(\ell^n)$, ranging over all primes $\ell > 7$, and gave a partial classification of which rational $j$-invariants may appear. 
Terao \cite[Theorem 1.5]{Ter24} showed that if $H \leq \GL_2(\hat{\ZZ})$ has level $7$ and $x \in X_H$ is an isolated point with $j(x)$ rational and non-CM, then $j(x) = 3^3 \cdot 5 \cdot 7^5 / 2^7$ and $H$ is conjugate to one of nine known subgroups.
Bourdon and Ejder \cite{Bou25-2} completed the classification begun by Ejder \cite{Ejd22} for $X_1(\ell^n)$ and proved an analogous result for the family of modular curves $X_0(\ell^n)$. 

\begin{theorem}[{\cite[Theorems 1 and 2.]{Bou25-2}}] \label{thm:Bou25-2 X_1 and X_0}
    Let $\ell$ be a prime and let $j \in \QQ$ be a rational number.
    \begin{enumerate}
        \item There exists an isolated point $x \in X_1(\ell^n)$ with $j(x) = j$ for some $n \in \NN$ if and only if $j$ is a CM $j$-invariant, $-7 \cdot 11^3$, or $-7 \cdot 137^3 \cdot 2083^3$. Moreover, the non-CM $j$-invariants occur if and only if $\ell = 37$.
        \item There exists an isolated point $x \in X_0(\ell^n)$ with $j(x) = j$ for some $n \in \NN$ if and only if $j$ is a CM $j$-invariant, $-11 \cdot 131^3$, $-11^2$, $-17^2 \cdot 101^3 / 2$, $-17 \cdot 373^3 / 2^{17}$, $-7 \cdot 11^3$, or $-7 \cdot 137^3 \cdot 2083^3$. Moreover, the non-CM $j$-invariants in this list correspond to isolated rational points on $X_0(\ell)$.
    \end{enumerate}
\end{theorem}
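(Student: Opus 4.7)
The plan is to reduce to the already-classified case of $X_0(\ell^n)$ by showing that, under the hypotheses, the image $y := f(x)$ of $x$ under the natural projection $f : X_\Delta(\ell^n) \to X_0(\ell^n)$ is itself an isolated point of $X_0(\ell^n)$. Once this is established, Theorem \ref{thm:Bou25-2 X_1 and X_0}(2) of Bourdon and Ejder forces $j(x) = j(y)$ to be one of the six explicit non-CM rational values in their list, each of which arises as the $j$-invariant of an isolated point on $X_0(\ell')$ for some $\ell' \in \{\,11, 17, 37\,\}$.

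To set up this reduction, let $(E, \Delta P)$ be a minimal representative of $x$, with $E/\QQ$ non-CM and $P \in E$ of exact order $\ell^n$. By Theorem \ref{thm:Dia06 p 66:contain -I} together with Proposition \enumref{prop:borel formulae}{prop:borel formulae:index B Delta}, the map $f$ has degree $\phi(\ell^n)/\#\Delta$. Theorem \ref{thm:Bou19 4.3} then guarantees that $y$ is isolated whenever $\deg(x) = \deg(f)\deg(y)$. Applying Corollary \ref{cor:deg(x) on X_H} to both sides, this degree equality is equivalent to
\[ [\bar G \cap B_0(\ell^n) : \bar G \cap B_\Delta(\ell^n)] = \phi(\ell^n)/\#\Delta, \]
where $\bar G := \pi_{\ell^n}(\rho_{E,\ell^\infty}(\Gal_\QQ))$. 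Since $B_\Delta(\ell^n)$ is normal in $B_0(\ell^n)$ with quotient $(\ZZ/\ell^n\ZZ)^\times/\Delta$ given by the top-left entry, this in turn is equivalent to surjectivity of the induced map $\bar G \cap B_0(\ell^n) \to (\ZZ/\ell^n\ZZ)^\times/\Delta$.

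I would verify this surjectivity by a case analysis on $\ell$ using the existing classifications of $\ell$-adic Galois images. For primes at which the images attached to non-CM $E/\QQ$ have been enumerated by Rouse, Sutherland, and Zureick-Brown \cite{RSZB22} and by Furio \cite{Fur25}, I would run through each admissible $\bar G$ and each $\Delta \ni -1$: whenever the top-left entry map is surjective, $y$ is isolated and Theorem \ref{thm:Bou25-2 X_1 and X_0}(2) concludes the argument; whenever it fails, I would instead push $x$ forward only to the intermediate modular curve $X_{\Delta^*}(\ell^n)$, where $\Delta^* \subseteq (\ZZ/\ell^n\ZZ)^\times$ is the image of that top-left entry map, so that the analogous degree equality for the map $X_\Delta(\ell^n) \to X_{\Delta^*}(\ell^n)$ holds by construction and Theorem \ref{thm:Bou19 4.3} produces an isolated point on $X_{\Delta^*}(\ell^n)$ with $j$-invariant $j(E)$, which I would then cross-reference against the lists in \cite{Bou25-2} using the explicit $\ell$-adic data of \cite{RSZB22,Fur25}. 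For primes $\ell$ outside the scope of these classifications, I would combine Serre's open image theorem with Mazur's isogeny theorem---which restricts the existence of non-trivial $\QQ$-rational $\ell$-isogenies for non-CM $E/\QQ$ to a finite list of small primes---to conclude that $\bar G \cap B_0(\ell^n)$ is too large for the top-left surjectivity to fail.

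The technical heart of the argument lies in the small-prime case analysis where $\bar G$ is genuinely exceptional---in particular, when $\bar G$ is contained in the normalizer of a Cartan subgroup or when $E$ admits a $\QQ$-rational $\ell^n$-isogeny---since in such cases the top-left entry map can fail to surject and one must invoke the intermediate-curve detour above to match $j(E)$ against the six explicit values. Translating the classification data of \cite{RSZB22,Fur25} into statements at the level of $X_\Delta$, and verifying compatibility with the Bourdon--Ejder classification for each exceptional image, is the most delicate part of the proof.
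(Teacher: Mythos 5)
The statement you were asked about is not proved in this paper at all --- it is an external result, cited directly from Bourdon and Ejder as \cite[Theorems 1 and 2.]{Bou25-2}, and used as a black box. Your proposal does not prove it either. In fact, your opening sentence invokes Theorem \ref{thm:Bou25-2 X_1 and X_0}(2) --- the very statement to be established --- as a known ingredient, which would make the argument circular if it were really aimed at that statement.

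What your proposal actually sketches is a proof of a different result in the paper, namely Theorem \ref{thm:intro:X Delta prime power} (equivalently Theorem \ref{thm:X Delta prime power}) about isolated points on the intermediate curves $X_\Delta(\ell^n)$. Read as a proposal for that theorem, the architecture is recognizable but the key mechanism differs from the paper's: you propose to push $x$ forward along $X_\Delta(\ell^n) \to X_0(\ell^n)$ (or, when the degree condition fails, to an auxiliary $X_{\Delta^*}(\ell^n)$ determined by the image of the top-left-entry map), verify the degree-multiplicativity hypothesis of Theorem \ref{thm:Bou19 4.3} case-by-case on $\bar G$, and then cite the $X_0$ classification. The paper instead first reduces the level: it uses Proposition \ref{prop:fixed prime power deg} to push $x$ down to $X_\Delta(\ell^m)$ where $\ell^m$ is the level of $\rho_{E,\ell^\infty}(\Gal_\QQ)$, and only then runs the genus and degree comparisons (Theorems \ref{thm:RR isolated} and \ref{thm:Ter24 genus 0}, the Magma computations in Propositions \ref{prop:X Delta prime power:cases a and b}--\ref{prop:X Delta prime power:case d}, and Tables \ref{tab:X Delta ell n data}--\ref{tab:X Delta ell 2 data}). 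Your route would also need to handle the nonsplit-Cartan case and the level-$49$ exceptional image explicitly, where the degree equality you want can fail and the genus argument is doing the real work; the ``intermediate-curve detour'' you gesture at is not worked out and is precisely where the paper spends most of its effort. Finally, your appeal to Mazur's isogeny theorem for large $\ell$ is not how the paper handles that range: it relies on the trichotomy of Theorem \ref{thm:RSZB l-adic} and Furio's classification \cite{Fur25}, not on isogeny bounds.
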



\par
Our goal for this section is to consider the above theorem in the setting of intermediate modular curves of prime-power level. 
Notice that in the above theorem, the non-CM rational $j$-invariants arising from isolated points on $X_1(\ell^n)$ all appear as $j$-invariants arising from isolated points on $X_0(\ell)$. We will see that any rational $j$-invariant corresponding to an isolated point on a modular curve $X_\Delta(\ell^n)$ must appear in the list of rational isolated $j$-invariants arising from $X_0(\ell)$. 

\subsection{Possible \texorpdfstring{$\ell$}{l}-adic images of Galois.}
We shall need a classification by Rouse, Sutherland, and Zureick-Brown \cite{RSZB22} of the possible $\ell$-adic images of Galois associated to non-CM elliptic curves over $\QQ$. The classification builds on work of Sutherland and Zywina \cite{Sut17} and of Rouse and Zureick-Brown \cite{Rou15}. 

\begin{theorem}[{\cite[Theorem 1.1.6]{RSZB22}}] \label{thm:RSZB l-adic} 
    Let $E/\QQ$ be a non-CM elliptic curve, let $\ell$ be a prime, and denote $R \defeq \rho_{E, \ell^\infty}(\Gal_\QQ)$. Then exactly one of the following is true:
    \begin{enumerate}[label=(\alph*)]
        \item The modular curve $X_R$ is isomorphic to $\PP^1$ or a rank one elliptic curve. \label{thm:RSZB:a}
        \item The modular curve $X_R$ has an exceptional rational point for known $R$. \label{thm:RSZB:b}
        \item $R$ is conjugate to a subgroup of $\Cns(3^3), \Cns(5^2), \Cns(7^2), \Cns(11^2)$ or $\Cns(\ell)$ for $\ell \geq 19$. \label{thm:RSZB:c}
        \item 
        $R$ is conjugate to a subgroup of \verb|49.147.9.1| or \verb|49.196.9.1|.
        \label{thm:RSZB:d}
    \end{enumerate}
\end{theorem}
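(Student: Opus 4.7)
The plan is to enumerate, for each prime $\ell$, all open subgroups $R \leq \GL_2(\ZZ_\ell)$ containing $-I$ with $\det(R) = \ZZ_\ell^\times$ (up to conjugacy) and decide which arise as $\rho_{E,\ell^\infty}(\Gal_\QQ)$ for a non-CM $E/\QQ$. By Serre's open image theorem every such $R$ has finite level $\ell^N$, so for each $N$ the candidate groups correspond to open subgroups of the finite group $\GL_2(\ZZ/\ell^N\ZZ)$ with surjective determinant. Each such $H$ defines a modular curve $X_H/\QQ$ whose non-cuspidal rational points correspond exactly to non-CM $E/\QQ$ with $\rho_{E,\ell^\infty}(\Gal_\QQ)$ conjugate to a subgroup of $H$; in particular, $R$ itself arises from some $E$ iff there is a rational point on $X_R$ not accounted for by a proper sub-image. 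The theorem is thus a precise statement about when $X_H(\QQ)$ is infinite, finite but known, or currently undetermined.

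The second step is, for each candidate $H$, to compute explicit invariants of $X_H$: genus, gonality, and a model when feasible. Three regimes then arise. If $X_H \simeq \PP^1$ or is a positive-rank elliptic curve, $X_H(\QQ)$ is infinite and $H$ goes into case (a). If $\mathrm{genus}(X_H) \geq 2$, Faltings makes $X_H(\QQ)$ finite, and one enumerates its non-cuspidal non-CM rational points by Chabauty-Coleman, Mordell-Weil sieving, quadratic Chabauty, or descent, yielding the exceptional points of case (b). A key structural tool is the natural morphism $X_H \to X_{H'}$ for $H \leq H'$: a non-CM rational $j$-invariant on $X_H$ lives above one on $X_{H'}$, which is useful both for propagating rational points and for pruning the search.

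The third step is to prove exhaustiveness: every open $H$ with surjective determinant either falls in (a) or (b), factors through a smaller-level $H'$ already handled, or is conjugate to a subgroup of one of the non-split Cartan normalizers listed in (c) or the two level-$49$ subgroups listed in (d). For primes $\ell \geq 19$, Mazur's isogeny theorem and Serre's bounds drastically cut the possible non-surjective mod-$\ell$ images, so only the non-split Cartan case (c) can survive at higher levels. For small primes $\ell \in \{2,3,5,7\}$, one must work subgroup-by-subgroup, using prior classifications (Rouse-Zureick-Brown at $\ell = 2$, Sutherland-Zywina at mod-$\ell$ level, etc.) as input and then extending to all $\ell$-power levels via explicit modular-curve computations.

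The main obstacle is the computational scale at small primes, where the lattice of open subgroups of $\GL_2(\ZZ/\ell^N\ZZ)$ is huge and a large number of resulting modular curves have high genus. For each such curve one must build an explicit model, compute the Jacobian's Mordell-Weil group to sufficient precision, and certify the complete list of rational points via descent or sieving---a task that is not remotely uniform and requires delicate case analysis, especially at $\ell = 2, 3$. Without this systematic computational framework, the classification cannot be made unconditional, which is why the theorem leaves cases (c) and (d) explicitly unresolved: determining $X_H(\QQ)$ for the non-split Cartan family is tied to Serre's uniformity conjecture, and the two level-$49$ groups require still-unavailable techniques for their modular curves.
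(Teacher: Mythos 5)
This statement is Theorem~1.1.6 of Rouse--Sutherland--Zureick-Brown and is cited in the paper without proof, so there is no ``paper's own proof'' to compare against. Your sketch is a reasonable high-level description of the overall strategy of \cite{RSZB22}, but it is a roadmap rather than a proof, and it glosses over the parts that make the result genuinely hard.

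Two points deserve flagging. First, your enumeration step is not how one actually proceeds: one cannot simply run over all $N$ and list subgroups of $\GL_2(\ZZ/\ell^N\ZZ)$, since the number of subgroups grows without bound. The key structural device in \cite{RSZB22} is to work with \emph{arithmetically maximal} subgroups --- open $H$ with $X_H$ of genus $\ge 2$ or genus $1$ and rank $0$ whose proper overgroups all give genus $\le 1$, rank $\ge 1$ curves --- and to prove that the tree of relevant subgroups terminates. Without this pruning the search does not even provably finish, so the exhaustiveness claim in your third step is unsupported as written. Second, the exclusivity of the four cases (``exactly one'') is established by construction: (a) is exactly the situation where $X_R(\QQ)$ is infinite, (b) is where it is finite and has been completely determined, and (c)/(d) collect the precisely identified residual subgroups whose rational points could not be resolved; your proposal presents them as an informal trichotomy but never argues that the lists in (c) and (d) are complete, which is the actual content. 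Your citation of ``Mazur's isogeny theorem'' for $\ell \ge 19$ is also not quite the right input --- what is needed is the classification of maximal subgroups of $\GL_2(\FF_\ell)$ together with the resolution of $X_0(\ell)(\QQ)$, $X_{\mathrm{sp}}^+(\ell)(\QQ)$, and Bilu--Parent--Rebolledo type results eliminating the split Cartan case, leaving only $\Cns(\ell)$ as unresolved. None of this invalidates your outline, but be aware that it is a survey of the shape of the argument, not a proof; the substance lives in the arithmetic of several hundred explicit modular curves.
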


For a non-CM $j$-invariant attached to an elliptic curve $E/\QQ$, Theorem \ref{thm:RSZB l-adic} gives us four cases to consider. 
If $R$ is as in case \ref{thm:RSZB:a}, the possible subgroups that can arise as $R$ are all known. Specifically, {\cite[Corollary 1.6]{Sut17}} says that for $\ell = 2, 3, 5, 7, 11, 13$, there are $1201, 47, 23, 15, 2, 11$ subgroups of $\GL_2(\ZZ_\ell)$ that can arise as $R$, respectively. And for $\ell > 13$, the only possible subgroup is $R = \GL_2(\ZZ_\ell)$. 
If $R$ is as in case \ref{thm:RSZB:b}, there are $23$ possible known images for $R$, which have levels $2^4, 2^3, 5^2, 7, 11, 13, 17, 37$. It is conjectured {\cite[Conjecture 1.1.5]{RSZB22}} that these known subgroups are all of the exceptional groups of prime-power level. 
If $R$ is as in case \ref{thm:RSZB:d} and is conjugate to a subgroup of \verb|49.196.1|, then Bourdon and Ejder \cite[Proposition 1]{Bou25-2} showed that $R$ must be conjugate to \verb|49.196.9.1|.

\begin{proposition}[{\cite[Proposition 3.1]{Ejd22}}, {\cite[Proposition 4]{Bou25-2}}] \label{prop:Bou25-2 4}
    Let $E/\QQ$ be a non-CM elliptic curve. Suppose $\rho_{E,\ell} (\Gal_\QQ)$ is conjugate to the normalizer of a nonsplit Cartan subgroup for some prime $\ell > 13$. For $P \in E(\overline{\QQ})$ of order $\ell^n$, let $x = [(E,P)] \in X_1(\ell^n)$ be the associated closed point on the modular curve. Then
    \[ \deg(x) = \frac{1}{2} \p{\ell^2 - 1} \ell^{2n-2} = \deg(X_1(\ell^n) \to X(1)) . \]
\end{proposition}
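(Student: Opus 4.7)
The plan is to compute $\deg(x)$ directly as an index of subgroups of $\GL_2(\ZZ/\ell^n\ZZ)$, using the hypothesis that $j(E) \in \QQ$ together with the known structure of the $\ell$-adic image of Galois attached to $E$.

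First, since $E$ is defined over $\QQ(j(E)) = \QQ$, the closed point $x$ corresponds to the pair $(E, \pm P)$ on $X_1(\ell^n) = X_{\pm B_1(\ell^n)}$, and Lemma \ref{lemma:deg(x) for x in X Delta} together with the orbit-stabilizer theorem gives
\[ \deg(x) = [\QQ(\pm P) : \QQ] = [\rho_{E,\ell^n}(\Gal_\QQ) : \rho_{E,\ell^n}(\Gal_\QQ) \cap \pm B_1(\ell^n)], \]
after choosing a basis of $E[\ell^n]$ so that $P$ corresponds to $(1,0)^T$, in which case the stabilizer of $\pm P$ in $\GL_2(\ZZ/\ell^n\ZZ)$ is precisely $\pm B_1(\ell^n)$. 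Next, under the hypothesis that $\rho_{E,\ell}(\Gal_\QQ)$ is conjugate to $\Cns(\ell)$ with $\ell > 13$, I would invoke Theorem \ref{thm:RSZB l-adic} together with supporting results from \cite{RSZB22} to conclude that $\rho_{E,\ell^n}(\Gal_\QQ)$ equals the full preimage $N_n \defeq \pi_\ell\inv(\Cns(\ell))$ in $\GL_2(\ZZ/\ell^n\ZZ)$; in particular $N_n$ contains the kernel of reduction modulo $\ell$.

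I would then compute $[N_n : N_n \cap \pm B_1(\ell^n)]$ by establishing the product decomposition $N_n \cdot \pm B_1(\ell^n) = \GL_2(\ZZ/\ell^n\ZZ)$ and applying Lagrange together with Proposition \ref{prop:borel formulae}. At level $\ell$, a direct orbit-stabilizer count for the action of $\Cns(\ell)$ on $((\ZZ/\ell\ZZ)^2 \setminus \{0\})/\pm 1$ shows $|\Cns(\ell) \cap \pm B_1(\ell)| = 4$ (the four elements being $\pm I$ from the Cartan together with two matrices of the form $\twomat{\pm 1}{0}{0}{\mp 1}$ from the non-Cartan coset), so the orbit of $\pm(1,0)^T$ has size $|\Cns(\ell)|/4 = (\ell^2-1)/2$, which exhausts the target and yields $\Cns(\ell) \cdot \pm B_1(\ell) = \GL_2(\ZZ/\ell\ZZ)$. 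Since $N_n$ contains $\ker(\pi_\ell)$, this factorization lifts to level $\ell^n$: any $g \in \GL_2(\ZZ/\ell^n\ZZ)$ has $\pi_\ell(g) = cb$ for some $c \in \Cns(\ell)$ and $b \in \pm B_1(\ell)$, and lifting $c,b$ to $N_n$ and $\pm B_1(\ell^n)$ respectively leaves an error in $\ker(\pi_\ell) \subseteq N_n$. Computing sizes via Proposition \ref{prop:borel formulae} then gives $|N_n \cap \pm B_1(\ell^n)| = 4\ell^{2n-2}$, so the index equals $(\ell^2-1)\ell^{2n-2}/2$, which by Theorem \ref{thm:Dia06 p 66:contain -I} coincides with $\deg(X_1(\ell^n) \to X(1))$.

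The main obstacle is justifying that $\rho_{E,\ell^n}(\Gal_\QQ)$ is genuinely the full preimage $N_n$, not a proper subgroup. For $\ell \geq 19$ this is essentially immediate from RSZB case (c); the borderline case $\ell = 17$ additionally requires ruling out that $E$ gives rise to one of the known exceptional images of RSZB case (b), which can be handled by verifying that no exceptional subgroup of level $17$ in the RSZB database has mod-$17$ reduction conjugate to $\Cns(17)$.
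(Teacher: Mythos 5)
The paper does not prove this proposition itself---it cites it as \cite[Proposition 4]{Bou25-2}---so there is no paper proof to compare against. Nevertheless, your attempt has a genuine gap at its central step.

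Your framework (degree as $[\im\rho_{E,\ell^n} : \im\rho_{E,\ell^n}\cap \pm B_1(\ell^n)]$, the orbit-stabilizer count showing $|\Cns(\ell)\cap\pm B_1(\ell)|=4$, the product decomposition at level $\ell$, the index computation via Proposition~\ref{prop:borel formulae}, and your attention to the edge case $\ell=17$) is all sound. The problem is the claim that $\rho_{E,\ell^n}(\Gal_\QQ)$ equals the full preimage $N_n=\pi_\ell^{-1}(\Cns(\ell))$ inside $\GL_2(\ZZ/\ell^n\ZZ)$; equivalently, that the $\ell$-adic image has level $\ell$. Theorem~\ref{thm:RSZB l-adic}(c) asserts only that the $\ell$-adic image is conjugate to a \emph{subgroup} of $\Cns(\ell)$, not the whole preimage. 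In fact, by Furio's classification \cite[Theorem 1.9]{Fur25} (which the paper invokes in its Proposition~\ref{prop:X Delta prime power:case c}), the $\ell$-adic image $R$ is either conjugate to $\Cns(\ell^d)$ of level $\ell^d$ for some $d\geq 1$, or $R$ has level $\ell^2$ with $R(\ell^2)$ a certain semidirect product $\Cns(\ell)\ltimes\{I+\ell\twomat{a}{\epsilon b}{-b}{c}\}$. For $d>1$ and for the semidirect case, $R$ is a \emph{proper} subgroup of $N_n$, so your index computation $[N_n : N_n\cap \pm B_1(\ell^n)]$ does not compute $\deg(x)$, which is $[R(\ell^n):R(\ell^n)\cap\pm B_1(\ell^n)]$.

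The formula is still correct, but the proof must show directly that $R(\ell^n)$ acts transitively on the set of primitive vectors of $(\ZZ/\ell^n\ZZ)^2$ modulo $\pm 1$ (whose size is $(\ell^2-1)\ell^{2n-2}/2$). For $R$ of the form $\Cns(\ell^d)$ this follows because $R(\ell^n)\supseteq \Cns(\ell^n)$, and the nonsplit Cartan $C_{ns}(\ell^n)$ already acts simply transitively on primitive vectors (it is $(\ZZ/\ell^n\ZZ)[\sqrt\epsilon]^\times$ acting on itself). For the semidirect case one must verify transitivity separately: the unipotent factor $\{I+\ell\twomat{a}{\epsilon b}{-b}{c}\}$ acts transitively on each fiber of reduction mod $\ell$ over a primitive vector, and $\Cns(\ell)$ acts transitively on the base. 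So replacing your false identification $R=N_n$ with a case-by-case transitivity argument, keyed to Furio's classification, would close the gap.
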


As noted in \cite{Bou25-2}, there was in an error in a result used to prove \cite[Proposition 3.1]{Ejd22}, so the proof is given in \cite[Proposition 4]{Bou25-2}.
From the above proposition, we immediately obtain the following corollary.

\begin{corollary} \label{cor:used for case c}
    Let $E/\QQ$ be a non-CM elliptic curve. Suppose $\rho_{E,\ell}(\Gal_\QQ)$ is conjugate to the normalizer of a nonsplit Cartan subgroup for some prime $\ell > 13$. For $P \in E(\overline{\QQ})$ of order $\ell^n$ and $\{\, \pm 1 \,\} \leq \Delta \leq (\ZZ/\ell^n \ZZ)^\times$, let $x = [(E,\Delta P)] \in X_\Delta(\ell^n)$ be the associated closed point on the modular curve. Then
    \[ \deg(x) = \frac{1}{\#\Delta} \p{\ell^2 - 1} \ell^{2n-2}  = \deg\p{X_\Delta(\ell^n) \to X(1)} . \]
\end{corollary}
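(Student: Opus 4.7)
The plan is to sandwich $\deg(x)$ between two equal quantities using the tower
\[ X_1(\ell^n) \xra{f} X_\Delta(\ell^n) \xra{g} X(1), \]
together with the fact that Proposition \ref{prop:Bou25-2 4} already pins down the degree of the relevant closed point at the top of this tower. Concretely, I would set $y \defeq [(E,P)] \in X_1(\ell^n)$, observe that $f(y) = x$ and that $g(x) = j(E) \in X(1)(\QQ)$ has degree $1$ because $E$ is defined over $\QQ$, and then invoke Proposition \ref{prop:Bou25-2 4} to obtain
\[ \deg(y) = \tfrac{1}{2}(\ell^2 - 1)\ell^{2n-2} = \deg(g \circ f). \]

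Next I would compute $\deg(f)$ and $\deg(g)$ directly. Since both $B_{\pm 1}(\ell^n)$ and $B_\Delta(\ell^n)$ contain $-I$, Theorem \ref{thm:Dia06 p 66:contain -I} combined with Proposition \ref{prop:borel formulae} yields $\deg(f) = [B_\Delta(\ell^n) : B_{\pm 1}(\ell^n)] = \#\Delta/2$ and $\deg(g) = [\GL_2(\ZZ/\ell^n \ZZ) : B_\Delta(\ell^n)] = (\ell^2-1)\ell^{2n-2}/\#\Delta$. In particular the product $\deg(f)\deg(g)$ equals $\deg(g \circ f)$ as computed above, a consistency check that the tower is behaving as expected.

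Finally, two applications of Lemma \ref{lemma:closed point deg leq} along the tower give
\[ \deg(y) \leq \deg(f)\deg(x) \leq \deg(f)\deg(g)\deg(g(x)) = \deg(f)\deg(g), \]
and since the two outer quantities both equal $\deg(y)$ by the preceding paragraphs, equality must hold throughout. This forces $\deg(x) = \deg(g) = (\ell^2-1)\ell^{2n-2}/\#\Delta$, which is exactly what is claimed. There is no substantive obstacle: all the real work was done in Proposition \ref{prop:Bou25-2 4}, and the rest is routine bookkeeping with the multiplicativity of covering degrees.
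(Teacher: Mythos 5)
Your proof is correct, and it is exactly the ``immediate'' deduction the paper has in mind when it states the corollary with no written argument: you sandwich $\deg(x)$ in the tower $X_1(\ell^n)\to X_\Delta(\ell^n)\to X(1)$ using Lemma \ref{lemma:closed point deg leq} twice, and Proposition \ref{prop:Bou25-2 4} pins both ends to the same number, forcing equality throughout. The index computations via Theorem \ref{thm:Dia06 p 66:contain -I} and Proposition \ref{prop:borel formulae} are correct, and the observation that $\deg(g(x))=1$ (since $j(E)\in\QQ$) is the one small fact that makes the sandwich close.
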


We will make use of the following theorem.

\begin{theorem}[{\cite[Theorem 1.3]{Ter24}}] \label{thm:Ter24 genus 0}
    Let $n \geq 1$, let $H \leq \GL_2(\ZZ /n \ZZ)$ be a subgroup, and let $x$ be a non-cuspidal isolated point on $X_H$ with $j(x) \neq \{\,0, 1728\,\}$. Let $E/\QQ$ be an elliptic curve such that $j(E) = j(x)$, and let $G_n \defeq \rho_{E, n}(\Gal_{\QQ(j(x))})$. Then the modular curve $X_{G_n}$ contains an isolated point with $j$-invariant equal to $j(x)$.
\end{theorem}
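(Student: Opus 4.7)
The plan is to exhibit a specific isolated closed point on $X_{G_n}$ above $j(x)$, namely $y \defeq [(E, [\alpha]_{G_n})]$ with $\alpha$ chosen so that $\rho_{E,n,\alpha}(\Gal_\QQ) = G_n$, and to prove its isolatedness by comparing with a well-chosen intermediate modular curve. Since $X_H \cong X_{\pm H}$ and $X_{G_n} \cong X_{\pm G_n}$, I will assume $-I \in H$ and $-I \in G_n$. Let $(E_0, [\alpha_0]_H)$ be a minimal representative for $x$; since $E_0$ is defined over $\QQ(j(x)) = \QQ$ and $j(E_0) = j(E) \notin \{0, 1728\}$, the curves $E$ and $E_0$ differ by a quadratic twist over $\QQ$. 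The hypothesis $-I \in H$ causes the twist character to cancel in the condition ``$\rho_{\bullet, n}(\sigma) \in H$,'' so without loss of generality $E = E_0$ and $\alpha = \alpha_0$. Setting $H' \defeq G_n \cap H$ (which contains $-I$), I form the diagram
\[
X_H \xleftarrow{g} X_{H'} \xrightarrow{f} X_{G_n},
\]
where by Theorem \ref{thm:Dia06 p 66:contain -I} the maps have degrees $[H : H']$ and $[G_n : H']$ respectively. Define $x' \defeq [(E, [\alpha]_{H'})]$, so $g(x') = x$ and $f(x') = y$; Corollary \ref{cor:deg(x) on X_H} then yields $\deg(y) = 1$ and $\deg(x') = \deg(x) = [G_n : H']$. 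Since $\deg(x') = \deg(f) \cdot \deg(y)$, Theorem \ref{thm:Bou19 4.3} applied to $f$ reduces the problem to showing that $x'$ is isolated on $X_{H'}$.

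I will establish this by contradiction, using the pushforward $g_* \colon \DDiv(X_{H'}) \to \DDiv(X_H)$ and its descent to Picard schemes. If $x'$ is $\PP^1$-parametrized, there exists $x'' \in |x'|$ with $x'' \neq x'$; then $g_*(x'') \sim g_*(x') = x$ is effective of degree $\deg(x)$. Either $g_*(x'') \neq x$, giving a $\PP^1$-parametrization of $x$ and contradicting its isolatedness, or $g_*(x'') = x$, in which case a coefficient analysis forces $x''$ to be supported on the finite set $g^{-1}(x)$; but the positive-dimensional projective space $|x'|$ cannot embed in the finite collection of such effective divisors, a contradiction. For the AV case, suppose $x'$ is AV-parametrized by a positive-rank abelian subvariety $A \subseteq \PPic_{X_{H'}/\QQ}^0$, with effective divisors $D_a \sim x' + a$ for $a \in A$. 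Consider the image $g_*(A) \subseteq \PPic_{X_H/\QQ}^0$. If $g_*(A)$ has positive rank, then $g_*(D_a) \sim x + g_*(a)$ exhibits an AV-parametrization of $x$ by $g_*(A)$, contradicting $x$ being AV-isolated. Otherwise $g_*(A)$ has rank zero, so $K \defeq A \cap \ker(g_*)$ has positive rank (by the isogeny $A / K \to g_*(A)$ and additivity of Mordell--Weil rank). For $a \in K(\QQ)$, the relation $g_*(D_a) \sim x$ together with the $\PP^1$-isolatedness of $x$ forces $g_*(D_a) = x$, so $D_a$ is supported on the finite set $g^{-1}(x)$; but the identity $a = \AA_{X_{H'}/\QQ}(D_a) - \AA_{X_{H'}/\QQ}(x')$ makes $a \mapsto D_a$ injective, so the infinite set $K(\QQ)$ must embed into a finite set, a contradiction.

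The hardest step will be the rank-zero subcase of the AV argument, which demands that the AV-isolatedness and $\PP^1$-isolatedness of $x$ be deployed in tandem, together with the rigidity of the Abel--Jacobi map to rule out collapse of the parametrizing family under pushforward. A secondary technical point is handling the case when $X_H$ fails to be geometrically integral (i.e., $\det H$ is not surjective onto $(\ZZ/n\ZZ)^\times$); this will require decomposing the Picard schemes along geometric components and applying the general version of Theorem \ref{thm:RR isolated}, but does not alter the structural flow of the argument.
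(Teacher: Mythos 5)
The paper does not actually prove this statement; it is quoted directly from Terao \cite[Theorem 1.3]{Ter24}, so there is no internal proof to compare against. Your argument is essentially the standard one behind that result: replace $H$ and $G_n$ by $\pm H$ and $\pm G_n$, pass to the intermediate curve $X_{H'}$ with $H' = H \cap \pm G_n$, use Corollary \ref{cor:deg(x) on X_H} and Theorem \ref{thm:Dia06 p 66:contain -I} to verify $\deg(x') = \deg(f)\deg(y)$ with $\deg(y) = 1$, and push forward along $f$ via Theorem \ref{thm:Bou19 4.3}. The step you prove by hand --- that isolatedness pulls back along $g : X_{H'} \to X_H$ when $\deg(x') = \deg(g(x'))$ --- is precisely the other half of \cite[Theorem 4.3]{Bou19} (generalized in \cite[Theorem 2.15]{Ter24}), so you could simply cite it; that said, your pushforward/norm argument is the correct proof of it, and your $\PP^1$ case (if every member of the infinite linear system pushed forward to $x$, it would inject into the finite set of effective divisors supported on $g^{-1}(x)$) is sound.

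Two points need tightening. First, after trading $E$ for the minimal representative $E_0$, the quadratic twist only gives $\pm\rho_{E_0,n,\alpha_0}(\Gal_\QQ) = \pm G_n$; the image can drop by index $2$, so the asserted equality $\rho_{E,n,\alpha}(\Gal_\QQ) = G_n$ is not justified. This is harmless --- all you use is the containment in $\pm G_n$ together with $[G' : G' \cap H] = [\pm G' : \pm G' \cap H]$, which holds because $-I \in H$ --- but it should be said that way. Second, and more substantively, in the rank-zero subcase of the AV argument you choose, for $a \in K(\QQ)$, an effective divisor $D_a$ over $\QQ$ in the class $\AA_{X_{H'}/\QQ}(x') + a$. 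The definition of AV-parametrized only places this class in $\WW_{X_{H'}/\QQ}(\QQ)$, and a rational point of $\WW_{X_{H'}/\QQ}$ need not lift to a rational effective divisor, since the fiber of the Abel map over it is a Brauer--Severi variety which may have no $\QQ$-points. The repair is to run that subcase on geometric points: $\PP^1$-isolatedness of $x$ forces the class of $x$ to contain a unique effective divisor even after base change to $\overline{\QQ}$ (the relevant $h^0$ is minimal and is insensitive to base change), so $g_*(D_a) = x$ holds for $D_a$ defined over $\overline{\QQ}$, and your finiteness count then contradicts the infinitude of $K(\overline{\QQ})$ as soon as $K$ has positive dimension. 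With this formulation the natural dichotomy is on $\dim (A \cap \ker g_*)$ rather than on rank: if that kernel is finite, $g_*(A)$ is isogenous to $A$ and automatically has positive rank, which is your first case. After these adjustments the argument closes.
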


If the modular curve $X_{G_n}$ in the above theorem has genus $0$, then Theorem \ref{thm:RR isolated} implies that there are no isolated points on $X_{G_n}$ and hence $x$ cannot be isolated. 
This will allow us to rule out certain cases in the following proposition. 

\begin{proposition} \label{prop:X Delta prime power:cases a and b}
    Let $\ell\in \NN$ be prime and $n \in \NN$ a positive integer. Let $E/\QQ$ be a non-CM elliptic curve and suppose $\rho_{E,\ell^\infty}(\Gal_\QQ)$ is conjugate to a known image, as in cases \ref{thm:RSZB:a} or \ref{thm:RSZB:b} of Theorem \ref{thm:RSZB l-adic}. If $\ell \neq 11, 17, 37$, then there are no isolated points above $j(E)$ on a modular curve $X_\Delta(\ell^n)$. 
\end{proposition}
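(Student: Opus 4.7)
The approach is a proof by contradiction. Suppose there is an isolated point $x \in X_\Delta(\ell^n)$ above $j(E)$. Since $E$ is non-CM, $j(E) \notin \{0, 1728\}$, so Theorem \ref{thm:Ter24 genus 0} produces an isolated point on $X_G$ with $j$-invariant $j(E)$, where $G \defeq \rho_{E,\ell^n}(\Gal_\QQ) = \pi_{\ell^n}(R)$. Because $R$ has some finite level $\ell^m$, only finitely many modular curves $X_G$ arise as $n$ varies, with $X_G = X_R$ whenever $n \geq m$.

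In case \ref{thm:RSZB:a}, $X_R$ is either $\PP^1_\QQ$ or a rank $1$ elliptic curve over $\QQ$, so $X_R$ has infinitely many $\QQ$-rational points. The natural finite surjective morphism $X_R \to X_G$ of geometrically integral $\QQ$-curves (integrality follows from $\det R = \ZZ_\ell^\times$ via the Weil pairing) ensures $X_G$ also has infinitely many $\QQ$-rational points and genus at most $1$. If $g(X_G) = 0$, then $X_G \cong \PP^1_\QQ$, and Theorem \ref{thm:RR isolated} with $r = 1$ and $g = 0$ shows every closed point is $\PP^1$-parametrized. If $g(X_G) = 1$, then $X_G$ is an elliptic curve with infinitely many $\QQ$-rational points and hence of positive rank, so every $\QQ$-rational point is AV-parametrized by translation in the Jacobian $J(X_G) = X_G$, while every closed point of degree $>1$ is $\PP^1$-parametrized by Theorem \ref{thm:RR isolated}. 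In either subcase $X_G$ has no isolated points, contradicting the consequence of Theorem \ref{thm:Ter24 genus 0}.

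In case \ref{thm:RSZB:b}, $R$ is conjugate to one of the $23$ known exceptional images of \cite{RSZB22}; discarding those at levels $11, 17, 37$ leaves $17$ explicit subgroups at levels in $\{8, 16, 25, 7, 13\}$, each attached to an explicit non-CM rational $j$-invariant $j(E)$. Here Theorem \ref{thm:Ter24 genus 0} alone does not suffice, since the canonical rational point above $j(E)$ on $X_G = X_R$ has degree $1$ and may genuinely be isolated. Instead, I plan to use Corollary \ref{cor:deg(x) on X_H} to compute $\deg(x) = [G : G \cap B_\Delta(\ell^n)]$, along with analogous indices for the image of $x$ in $X_0(\ell^n)$ and preimages in $X_1(\ell^n)$; when the degree-multiplicativity hypothesis of Theorem \ref{thm:Bou19 4.3} holds along the tower $X_1(\ell^n) \to X_\Delta(\ell^n) \to X_0(\ell^n)$, Theorem \ref{thm:Bou25-2 X_1 and X_0}---which shows that no non-CM rational $j$-invariant arises from an isolated point on $X_0(\ell^n)$ or $X_1(\ell^n)$ for $\ell \notin \{11,17,37\}$---supplies the contradiction. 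The remaining possibilities should be handled by direct computation on $X_G$, combining Theorem \ref{thm:RR isolated} with the explicit structure of $G$ as a subgroup of $\GL_2(\ZZ/\ell^n\ZZ)$.

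The main obstacle is the case-by-case analysis in case \ref{thm:RSZB:b}: for each of the $17$ exceptional images $R$, one must identify which subgroups $\Delta \leq (\ZZ/\ell^n\ZZ)^\times$ and exponents $n$ admit a degree-multiplicative descent to $X_0(\ell^n)$ or ascent from $X_1(\ell^n)$, and then treat the residual cases by direct analysis of the $G$-orbits on $B_\Delta(\ell^n)$-level structures.
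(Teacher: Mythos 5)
Your Case \ref{thm:RSZB:a} argument is genuinely different from the paper's and is, I think, correct and cleaner. Since $\det R = \ZZ_\ell^\times$, the curve $X_G$ (where $G = R(\ell^n)$, which may equal $X_R$ or a quotient of it) is geometrically integral, receives a finite $\QQ$-morphism from $X_R$, and hence has genus $\leq 1$ and infinitely many rational points. From there the observation that a genus-$0$ curve (respectively, a positive-rank genus-$1$ curve) over $\QQ$ has no isolated points at all, combined with Theorem \ref{thm:Ter24 genus 0}, disposes of every case (a) image uniformly, regardless of $\Delta$ or $n$. The paper does not separate out this case: it treats $R = \GL_2(\ZZ_\ell)$ alone, and then folds the remaining known images (cases (a) and (b) together) into a single case-by-case Magma verification after reducing the level. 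Your route for case (a) avoids much of that computation, which is a genuine improvement in transparency.

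Case \ref{thm:RSZB:b} is where the proposal has a real gap, and you acknowledge as much. The two steps you propose do not close the argument. First, the ``degree-multiplicativity along the tower $X_1(\ell^n) \to X_\Delta(\ell^n) \to X_0(\ell^n)$'' is not something you can expect to hold in general: Theorem \ref{thm:Bou19 4.3} only transfers isolatedness \emph{downward} when $\deg(x) = \deg(f)\deg(f(x))$, and whether this holds depends delicately on how $R$ sits inside $B_\Delta(\ell^n)$ versus $B_0(\ell^n)$; examining preimages in $X_1(\ell^n)$ cannot help, since isolatedness does not ascend. Second, and more fundamentally, your reduction via Theorem \ref{thm:Ter24 genus 0} bounds the number of curves $X_G$, but \emph{not} the number of curves $X_\Delta(\ell^n)$ on which the hypothetical isolated point lives --- $n$ remains unbounded, and when the Theorem \ref{thm:Ter24 genus 0} reduction fails (as it must in case (b), since $X_G$ really does have an isolated rational point), you are left to ``handle the residual cases by direct analysis'' over infinitely many $(n, \Delta)$. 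What the paper uses here, and what your proposal is missing, is Proposition \ref{prop:fixed prime power deg} together with Theorem \ref{thm:Bou19 4.3}: if $R$ has level $\ell^m$ and $n \geq m$, the natural map $X_\Delta(\ell^n) \to X_\Delta(\ell^m)$ preserves isolatedness, so one may assume $\ell^n \mid \ell^m$. Only with that level bound in hand does the remaining analysis become a finite (Magma-checkable) problem --- and in fact the paper's finiteness then rests on $\phi(\ell^n)/2$ being prime for most small prime powers, leaving just $\ell^n \in \{25, 27, 32\}$, which the paper dispatches by comparing fiber degrees in $X_1(\ell^n)$ against $\genus(X_\Delta(\ell^n))$ via Lemma \ref{lemma:closed point deg leq} and Theorem \ref{thm:RR isolated}. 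Without the level-reduction step your case (b) cannot be completed.
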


\begin{proof}
    First, suppose $R \defeq \rho_{E, \ell^\infty}(\Gal_\QQ)$ is equal to $\GL_2(\ZZ_\ell)$, for some prime $\ell$. Then for every $n \in \NN$ and $\Delta \leq (\ZZ/ \ell^n \ZZ)^\times$, the fiber of $X_\Delta(\ell^n) \to X(1)$ over $j(E)$ is a single closed point of maximal degree. So, if there is an isolated point on $X_\Delta(\ell^n)$ above $j(E)$, then Theorem \ref{thm:Bou19 4.3} implies $j(E) \in X(1)$ is isolated. But $X(1)$ has genus $0$, so by Theorem \ref{thm:RR isolated} this is not possible. 
    
    We now assume $R$ is conjugate to a subgroup not equal to $\GL_2(\ZZ_\ell)$ that appears in \cite[Tables 1-4]{Sut17} or in \cite[Table 1]{RSZB22}; these are cases \ref{thm:RSZB:a} and \ref{thm:RSZB:b} of Theorem \ref{thm:RSZB l-adic}.
    By Theorem \ref{thm:Bou25-2 X_1 and X_0}, there are no isolated points on $X_0(\ell^n)$ or $X_1(\ell^n)$ associated to $E$ because $\ell \neq 11,17,37$, so it suffices to consider intermediate modular curves $X_\Delta(\ell^n)$.
    So, let $n$ be a positive integer and $\{\, \pm 1 \,\} \subsetneq \Delta \subsetneq (\ZZ/\ell^n\ZZ)^\times$ a subgroup. 
    Suppose $x \in X_\Delta(\ell^n)$ is an isolated point with $j(x) = j(E)$ and say $x = [(E, \Delta P)]$.
    By Theorem \ref{thm:RSZB l-adic}, and because we are assuming $\ell \neq 11, 17, 37$, we know $R$ has level $\ell^m$ for some 
    \[ \ell^m \in \cL \defeq \{\, 1, 2, 4, 8, 16, 32, 3, 9, 27, 5, 25, 7, 13 \,\}. \] 
    If $n \geq m$, then by Proposition \ref{prop:fixed prime power deg} and Theorem \ref{thm:Bou19 4.3}, the image of $x$ under the natural map $X_\Delta(\ell^n) \to X_\Delta(\ell^m)$ is isolated. 
    So, it suffices to consider the case when $\ell^n \in \cL$. 
    Note there are no proper subgroups $\{\, \pm 1 \,\} \subsetneq \Delta \subsetneq (\ZZ/\ell^n\ZZ)^\times$ for 
    \[ \ell^n \in \{\, 1, 2, 4, 8, 3, 9, 5, 7 \,\} . \]
    Moreover, every intermediate modular curve of level $13$ and $16$ has genus zero, and hence has no isolated points by Theorem \ref{thm:RR isolated}. 
    
    So, it remains to consider the case when $\ell^n \in \{\, 32, 27, 25 \,\}$. 
    There are $132$ possible choices for $R$ to consider, and these can be found 
    in the file \href{https://github.com/abbey-bourdon/rational-isolated-prime-power/blob/main/elladicgens.txt}{\texttt{elladicgens.txt}} associated to \cite{Bou25-2}. 
    Let $f$ denote the natural map $X_1(\ell^n) \to X_\Delta(\ell^n)$ and let $x' \in X_1(\ell^n)$ be a closed point in the fiber of $f$ over $x$.
    We use \verb|Magma| to find that all but $28$ of the $132$ images have the property that every closed point in the fiber of $f$ over $x$ has degree strictly greater than $(\#\Delta / 2) \genus(X_\Delta(\ell^n))$. The data in Table \ref{tab:X Delta ell n data} gives the exact bound on $\deg(x')$ for each level $\ell^n$. 
    If $\deg(x') > (\#\Delta / 2) \genus(X_\Delta(\ell^n))$, then Lemma \ref{lemma:closed point deg leq} implies 
    \[ \deg(x) \geq \frac{2 \deg(x')}{\#\Delta} > \frac{2 \genus\p{X_1(\ell^n)}}{\#\Delta} > \genus\p{X_\Delta(\ell^n)}, \]
    which contradicts that $x$ is isolated by Theorem \ref{thm:RR isolated}.
    So, we may assume that $R$ is one of the remaining $28$ images. A \verb|Magma| computation shows that the modular curve $X_R(\ell)$ has genus zero, so Theorem \ref{thm:RR isolated} implies there are no isolated points on $X_R(\ell)$. But this contradicts Theorem \ref{thm:Ter24 genus 0}, so we conclude that $x$ cannot be isolated. 
    See the website of the author for the \verb|Magma| code used. 
\end{proof}

\begin{table}[htp]
    \centering
    \begin{tabular}{c c c c}
        \hline
        $\ell^n$ & $\#\Delta$ & $\genus(X_\Delta(\ell^n))$ & $\frac{\# \Delta}{2} \genus(X_\Delta(\ell^n))$
        \\ \hline \hline
        $25$ & $4$ & $4$ & $8$
        \\ $25$ & $10$ & $0$ & $0$
        \\ \hline
        $27$ & $6$ & $1$ & $3$
        \\ \hline
        $32$ & $4$ & $5$ & $10$
        \\ $32$ & $8 $& $1$ & $4$
        \\ \hline
    \end{tabular}
    \caption{Data for intermediate modular curves $X_\Delta(\ell^n)$.}
    \label{tab:X Delta ell n data}
\end{table}

\par 
We now consider the situation when $R \defeq \rho_{E, \ell^\infty} (\Gal_\QQ)$ is as in case \ref{thm:RSZB:c} of Theorem \ref{thm:RSZB l-adic}. That is, we have $R \leq \Cns(3^3), \Cns(5^2), \Cns(7^2), \Cns(11^2)$ or $\Cns(\ell)$ for $\ell \geq 19$. For this, a key ingredient is work of Furio \cite[Theorem 1.9]{Fur25}, which characterizes the possibilities for $R$ in this case.

\begin{proposition} \label{prop:X Delta prime power:case c}
    Let $\ell$ be an odd prime and $n \in \NN$. Let $E/\QQ$ be a non-CM elliptic curve and $\Delta$ a subgroup $\{\, \pm 1 \,\} < \Delta < (\ZZ/\ell^n\ZZ)^\times$. If $\rho_{E, \ell} (\Gal_\QQ)$ is conjugate to a subgroup of the normalizer of a non-split Cartan subgroup, then there is no isolated point $x \in X_\Delta(\ell^n)$ with $j(x) = j(E)$. 
\end{proposition}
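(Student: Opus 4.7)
The plan is to show that any closed point $x = [(E, \Delta P)] \in X_\Delta(\ell^n)$ with $j(x) = j(E)$ has degree equal to the degree of the natural $j$-map $f : X_\Delta(\ell^n) \to X(1)$. Once this is established, Theorem \ref{thm:Bou19 4.3} forces $f(x) = j(E) \in X(1)$ to be isolated whenever $x$ is. Since $X(1) \cong \PP^1$ has genus $0$, Theorem \ref{thm:RR isolated} then yields a contradiction.

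For $\ell \geq 19$, the hypothesis on $\rho_{E,\ell}(\Gal_\QQ)$ places $R \defeq \rho_{E,\ell^\infty}(\Gal_\QQ)$ into case \ref{thm:RSZB:c} of Theorem \ref{thm:RSZB l-adic}, so $R \leq \Cns(\ell)$. Furio's Theorem~1.9 then restricts $R$ to a short list of possibilities, each closely related to the full normalizer. Invoking Corollary \ref{cor:deg(x) on X_H} with $H = B_\Delta(\ell^n)$, together with $[\QQ(j(E)):\QQ] = 1$, reduces the degree claim to the index computation $[R : R \cap \pi_{\ell^n}\inv(B_\Delta(\ell^n))] = \frac{1}{\#\Delta}(\ell^2 - 1)\ell^{2n-2}$, which equals $\deg(f)$ by Proposition \enumref{prop:borel formulae}{prop:borel formulae:index B Delta}. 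When $R$ is the full normalizer this is exactly Corollary \ref{cor:used for case c}; for the remaining possibilities permitted by Furio, a direct intersection computation yields the same value.

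For the remaining odd primes, the picture splits as follows. For $\ell \in \{\,13, 17\,\}$ the hypothesis is vacuous, since known results show the non-cuspidal rational points on the modular curve associated to $\Cns(\ell)$ are all CM, so no non-CM $E/\QQ$ has $\rho_{E,\ell}(\Gal_\QQ) \leq \Cns(\ell)$. For $\ell \in \{\,3, 5, 7, 11\,\}$ the hypothesis again places us in case \ref{thm:RSZB:c}, and $R$ lies in $\Cns(\ell^m)$ for some $\ell^m \in \{\,3^3, 5^2, 7^2, 11^2\,\}$. Only finitely many candidate $R$ need be considered, and the analysis proceeds as in Proposition \ref{prop:X Delta prime power:cases a and b}: for each candidate we either verify the degree equality directly, or show via a \verb|Magma| computation that $X_R$ has genus zero (contradicting Theorem \ref{thm:Ter24 genus 0}), or use Lemma \ref{lemma:closed point deg leq} together with Theorem \ref{thm:RR isolated} to rule out isolation by degree bounds. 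The main obstacle is organizing and executing this finite casework at the small primes; however, Furio's classification guarantees that in every case the map $X_\Delta(\ell^n) \to X(1)$ preserves isolation, so the genus-zero obstruction on $X(1)$ completes the argument.
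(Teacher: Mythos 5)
Your overall strategy---invoking Furio's classification, peeling off the large primes with an exact degree computation, and doing finite casework at the small primes---is aligned with the paper's proof. However, your opening ``plan'' and your closing sentence both overstate what you can actually establish, and this is a real gap.

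You announce that the plan is to show $\deg(x) = \deg(f)$ for the $j$-map $f : X_\Delta(\ell^n) \to X(1)$ in every case, and you conclude by asserting that ``Furio's classification guarantees that in every case the map $X_\Delta(\ell^n) \to X(1)$ preserves isolation.'' Neither is true for the small primes. Furio's theorem gives two possibilities for $R = \rho_{E,\ell^\infty}(\Gal_\QQ)$: either $R$ is conjugate to $\Cns(\ell^d)$, or $R$ has level $\ell^2$ and is a specific semidirect product whose mod-$\ell$ reduction is the full normalizer $\Cns(\ell)$. In the first case, and in the second case when $\ell > 13$, Proposition \ref{prop:Bou25-2 4} and Corollary \ref{cor:used for case c} apply and the degree is maximal, so pushing down to $X(1)$ and invoking Theorem \ref{thm:Bou19 4.3} works as you say. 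But Proposition \ref{prop:Bou25-2 4} requires $\ell > 13$; for $\ell \in \{3,5,7,11\}$ in the second Furio case, one only obtains the lower bound $\deg([(E,P)]) \geq \ell(\ell^2-1)/2$ on $X_1(\ell^2)$, not the maximal degree $\ell^2(\ell^2-1)/2$. The paper therefore does \emph{not} push down to $X(1)$ for these primes. Instead it reduces to $X_\Delta(\ell^2)$ via Proposition \ref{prop:fixed prime power deg} and Theorem \ref{thm:Bou19 4.3}, and then uses the explicit lower bound on $\deg(x)$ together with the genus data of Table \ref{tab:X Delta ell 2 data} and Theorem \ref{thm:RR isolated} to rule out isolation. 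You do mention the degree-bound-plus-genus argument as one of three alternatives for the small primes, so the right tool is within reach, but the framing that every case is handled by achieving maximal degree over $X(1)$ is what would need to be corrected; as written, the argument would silently fail at $\ell \in \{5,7,11\}$ with a non-maximal degree.

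Two smaller points. First, for Furio's second case with $\ell > 13$ you say a ``direct intersection computation'' is needed; in fact Corollary \ref{cor:used for case c} already applies verbatim because the semidirect structure still reduces to the full $\Cns(\ell)$ mod $\ell$, so no additional computation is required. Second, your dispatch of $\ell \in \{13,17\}$ as vacuous is a different route from the paper's: the paper handles these primes through Proposition \ref{prop:X Delta prime power:cases a and b} (and, for $\ell = 17$, through the separate analysis of known images in the proof of Theorem \ref{thm:intro:X Delta prime power}). Your route is defensible for $\ell = 13$ (the rational points of $X_{ns}^+(13)$ are known), and is consistent with the RSZB classification for $\ell = 17$ since neither case \ref{thm:RSZB:c} nor \ref{thm:RSZB:d} occurs there, but you would need to verify that none of the known exceptional level-$17$ images is contained in $\Cns(17)$ rather than asserting vacuousness without citation.
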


\begin{proof}
    Let $x \in X_\Delta(\ell^n)$ be such that $j(x) = j(E)$ and say $x = [(E, \Delta P)]$. 
    By Proposition \ref{prop:X Delta prime power:cases a and b}, we may assume $R \defeq \rho_{E, \ell^\infty} (\Gal_\QQ)$ is not a known image. By \cite[Theorem 1.9]{Fur25}, either $R$ has level $\ell^d$ and is conjugate to $\Cns(\ell^d)$ or $R$ has level $\ell^2$ and 
    \[ R(\ell^2) \simeq \Cns(\ell) \ltimes \left\{\, I + \ell \twomat{a}{\epsilon b}{-b}{c} \,\right\} , \]
    with the semidirect product defined by the conjugation action. 
    
    \par Suppose $R$ has level $\ell^d$ and is conjugate to $\Cns(\ell^d)$. The proof of \cite[Theorem 6]{Bou25-2}, which proves the claim for the case of $X_\Delta(\ell^d) = X_1(\ell^d)$, shows that $[(E,P)] \in X_1(\ell^n)$ has degree $\ell^{2n-2}(\ell^2 - 1)/2$, so $\deg(x) = \ell^{2n-1}(\ell^2 - 1)/(\# \Delta)$. But then Theorem \ref{thm:Bou19 4.3} implies that the image of $x$ under the natural map $X_\Delta(\ell^n) \to X(1)$ is isolated, which is absurd, since $X(1)$ has genus $0$. 
    
    \par Now suppose $R$ has level $\ell^2$ and is isomorphic to the semidirect product above. 
    The proof of \cite[Theorem 6]{Bou25-2} shows that $[(E,P)] \in X_1(\ell^2)$ has degree at least $\ell(\ell^2 - 1)/2$, so we have $\deg(x) \geq \ell(\ell^2 - 1)/\#\Delta$. 
    If $\ell > 13$, then Corollary \ref{cor:used for case c} and Theorem \ref{thm:Bou19 4.3} imply there are no isolated points on $X_\Delta(\ell^n)$. 
    
    If $3 \leq \ell \leq 11$, then there are no intermediate modular curves of level $\ell$ because $\phi(\ell)/2$ is prime. There is one intermediate modular curve of level $13$, 
    but this curve has genus $0$ and hence has no isolated points by Theorem \ref{thm:RR isolated}. 
    Hence, we may assume $3 \leq \ell \leq 13$ and $n \geq 2$. 
    Proposition \ref{prop:fixed prime power deg} and Theorem \ref{thm:Bou19 4.3} imply that if $x$ is isolated, then the image of $x$ under the natural map $f : X_\Delta(\ell^n) \to X_\Delta(\ell^2)$ is isolated. Note that there are no intermediate modular curves of level $9$ because $\phi(9)/2 = 3$ is prime, so we may assume $\ell \geq 5$. The data in Table \ref{tab:X Delta ell 2 data} combined with Theorem \ref{thm:RR isolated} shows that $f(x)$, and hence $x$, is not isolated. \qedhere 
\end{proof}

\begin{table}[htp]
    \centering
    \begin{tabular}{c c c c}
        \hline
        $\ell$ & $\#\Delta$ & $\operatorname{genus}(X_\Delta(\ell^2))$ & $\ell(\ell^2 - 1)/\# \Delta$
        \\ \hline\hline
        $5$ & 4 & $4$ & $30$
        \\ $5$ & 10 & $0$ & $12$
        \\ \hline
        $7$ & 6 & $19$ & $56$
        \\ $7$ & 14 & $3$ & $24$
        \\ \hline
        $11$ & 10 & $106$ & $132$
        \\ $11$ & 22 & $26$ & $60$
        \\ \hline
        $13$ & 4 & $516$ & $546$
        \\ $13$ & 6 & $340$ & $364$
        \\ $13$ & 12 & $164$ & $182$
        \\ $13$ & 26 & $50$ & $84$
        \\ $13$ & 52 & $24$ & $42$
        \\ $13$ & 78 & $16$ & $28$
        \\ \hline
    \end{tabular}
    \caption{Data for $X_\Delta(\ell^2)$ with $\ell \leq 13$.}
    \label{tab:X Delta ell 2 data}
\end{table}

By 
the previous proposition
and \cite[Proposition 1]{Bou25-2}, it remains to consider the case when $\rho_{E, \ell^\infty}(\Gal_\QQ)$ is conjugate to \verb|49.196.9.1|. 

\begin{proposition} \label{prop:X Delta prime power:case d}
    Let $E/\QQ$ be a non-CM elliptic curve and suppose $\rho_{E, 7^\infty}(\Gal_\QQ)$ is conjugate to \verb|49.196.9.1|. Then there are no isolated points above $j(E)$ on a modular curve $X_\Delta(7^n)$. 
\end{proposition}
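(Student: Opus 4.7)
The plan is to argue by contradiction, following the template of Propositions \ref{prop:X Delta prime power:cases a and b} and \ref{prop:X Delta prime power:case c}: reduce to a single modular curve of level $49$, then use Theorem \ref{thm:RR isolated} to contradict isolation. Suppose $x \in X_\Delta(7^n)$ is a non-cuspidal isolated point with $j(x) = j(E)$, and write $x = [(E, \Delta P)]$ for a point $P \in E$ of order $7^n$. By Theorem \ref{thm:Bou25-2 X_1 and X_0}, one may assume $\{\pm 1\} \subsetneq \Delta \subsetneq (\ZZ/7^n\ZZ)^\times$, so $\Delta$ is a proper intermediate subgroup.

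First I would reduce to level $49$. Since $\phi(7)/2 = 3$ is prime, no proper intermediate $\Delta$ exists for $n=1$, so $n \geq 2$. By hypothesis, $R \defeq \rho_{E,7^\infty}(\Gal_\QQ)$ has level exactly $49$. Applying Proposition \ref{prop:fixed prime power deg} with $m=2$ and $H = B_\Delta(7^n)$ (which has level $7^n$ for every such $\Delta$), the natural map $f : X_\Delta(7^n) \to X_{\bar\Delta}(49)$, where $\bar\Delta \leq (\ZZ/49\ZZ)^\times$ is the image of $\Delta$ modulo $49$, satisfies $\deg(x) = \deg(f)\deg(f(x))$; Theorem \ref{thm:Bou19 4.3} then gives that $f(x)$ is isolated on $X_{\bar\Delta}(49)$. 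If $\bar\Delta \in \{\{\pm 1\}, (\ZZ/49\ZZ)^\times\}$, Theorem \ref{thm:Bou25-2 X_1 and X_0} yields a contradiction. Since $(\ZZ/49\ZZ)^\times$ is cyclic of order $42$, the only remaining subgroups containing $-1$ have order $6$ or $14$.

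At level $49$ I would compute $\deg(f(x))$ directly using Corollary \ref{cor:deg(x) on X_H}, which gives
\[ \deg(f(x)) = [R(49) : R(49) \cap B_{\bar\Delta}(49)], \]
since the mod-$49$ reduction of the adelic image coincides with $R(49)$. Using the explicit generators of \verb|49.196.9.1| (available from \cite{RSZB22} and the LMFDB), a \verb|Magma| computation should yield $\deg(f(x))$ in each of the two remaining cases $\#\bar\Delta \in \{6, 14\}$. If the computed degree exceeds the genus of $X_{\bar\Delta}(49)$---namely $19$ or $3$ respectively, by Table \ref{tab:X Delta ell 2 data}---then Theorem \ref{thm:RR isolated}, applicable since $\det B_{\bar\Delta}(49) = (\ZZ/49\ZZ)^\times$ makes $X_{\bar\Delta}(49)$ geometrically irreducible, forces $f(x)$ to be $\PP^1$-parametrized, contradicting isolation.

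The main obstacle I anticipate is the small-genus case $\#\bar\Delta = 14$, where the genus is only $3$ and the direct degree bound may be tight. If it fails, a fallback is to imitate the proof of Proposition \ref{prop:X Delta prime power:cases a and b}: take a preimage $x' \in X_1(49)$ of $f(x)$ under the degree-$\#\bar\Delta/2$ natural map, compute $\deg(x')$ from the same $R(49)$ data, and combine the inequality $\deg(f(x)) \geq 2\deg(x')/\#\bar\Delta$ from Lemma \ref{lemma:closed point deg leq} with Theorem \ref{thm:RR isolated} applied on $X_{\bar\Delta}(49)$. Since $X_1(49)$ has large genus, $\deg(x')$ should be big enough to clear the required threshold $(\#\bar\Delta/2) \cdot g(X_{\bar\Delta}(49))$, exactly as in Proposition \ref{prop:X Delta prime power:cases a and b}.
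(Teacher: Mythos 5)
Your proposal is correct and mirrors the paper's argument: reduce to level $49$ via Proposition \ref{prop:fixed prime power deg} and Theorem \ref{thm:Bou19 4.3}, then compare the degree of the image point on the level-$49$ curve to its genus and invoke Theorem \ref{thm:RR isolated}. The paper streamlines the final step by computing the degree once on $X_0(49)$ (getting $56$) and observing that since $\QQ(\langle P\rangle)\subseteq\QQ(\Delta P)$ the degree on any $X_\Delta(49)$ is automatically at least $56$, while every intermediate genus is below $56$---so the ``tight'' genus-$3$ case you anticipated does not actually pose a problem and the fallback is unnecessary.
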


\begin{proof}
    Let $n$ be a positive integer and let $E/\QQ$ be a non-CM elliptic curve with $\rho_{E, 7^\infty}(\Gal_\QQ)$ conjugate to \verb|49.196.9.1|. Suppose $x \in X_\Delta(7^n)$ is an isolated point with $j(x) = j(E)$. 
    If $n = 1$, then $X_\Delta(7^n)$ is not an intermediate modular curve because $\phi(7)/2 = 3$ is prime, so Theorem \ref{thm:Bou25-2 X_1 and X_0} implies $x$ cannot be isolated. If $n \geq 2$, then by Proposition \ref{prop:fixed prime power deg} and Theorem \ref{thm:Bou19 4.3} imply that the image of $x$ under the natural map to $X_\Delta(49)$ is isolated. So, it suffices to show that $[(E, \Delta P)] \in X_\Delta(49)$ is not isolated. 
    
    A \verb|Magma| computation shows that if $x = [(E,\langle P \rangle )] \in X_0(49)$ is a closed point with \break $j(x) = j(E)$, then $\deg(x) = 56$. 
    The genus of each intermediate modular curve $X_\Delta(49)$ is strictly less than $56$,
    so $[(E, \Delta P)] \in X_\Delta(49)$ is not isolated by Theorem \ref{thm:RR isolated}. 
\end{proof}

\subsection{Proof of Theorem \ref{thm:intro:X Delta prime power}.}

We restate Theorem \ref{thm:intro:X Delta prime power} below for convenience. 

\begin{theorem} \label{thm:X Delta prime power}
    Let $\ell \in \NN$ be a prime, let $n \in \NN$ a positive number, and suppose $\Delta \leq (\ZZ/\ell^n \ZZ)^\times$ is a subgroup containing $-1$. 
    If $x \in X_\Delta(\ell^n)$ is a non-cuspidal isolated point with non-CM rational $j$-invariant $j \in \QQ$, then $j$ appears as the $j$-invariant of an isolated point on $X_0(\ell)$, for some $\ell \in \{\, 11, 17, 37 \,\}$. 
\end{theorem}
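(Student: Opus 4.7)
The plan is to proceed by case analysis on the $\ell$-adic Galois image $R \defeq \rho_{E,\ell^\infty}(\Gal_\QQ)$, where $E/\QQ$ is any non-CM elliptic curve with $j(E) = j$. Theorem \ref{thm:RSZB l-adic} partitions the possibilities into four cases (a)--(d), and the three preceding propositions dispatch almost all of them.

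First, Propositions \ref{prop:X Delta prime power:case c} and \ref{prop:X Delta prime power:case d} eliminate case (d) outright and eliminate case (c) whenever $\Delta$ is strictly intermediate between $\{\pm 1\}$ and $(\ZZ/\ell^n\ZZ)^\times$. For case (c) with $\Delta = \{\pm 1\}$ or $\Delta = (\ZZ/\ell^n\ZZ)^\times$, the curve $X_\Delta(\ell^n)$ coincides with $X_1(\ell^n)$ or $X_0(\ell^n)$, so Theorem \ref{thm:Bou25-2 X_1 and X_0} forces $j$ into the list of six non-CM rational $j$-invariants appearing there, each of which is the $j$-invariant of an isolated point on $X_0(\ell)$ for some $\ell \in \{11, 17, 37\}$. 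Next, Proposition \ref{prop:X Delta prime power:cases a and b} eliminates cases (a) and (b) for $\ell \notin \{11, 17, 37\}$.

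What remains is cases (a) or (b) with $\ell \in \{11, 17, 37\}$. Using Proposition \ref{prop:fixed prime power deg} and Theorem \ref{thm:Bou19 4.3}, I would first reduce to the situation in which $\ell^n$ is at least the level of $R$, so that the mod $\ell^n$ image $G_{\ell^n} \defeq R(\ell^n)$ defines the same modular curve as $R$, namely $X_R$. Since $j \neq 0, 1728$, Theorem \ref{thm:Ter24 genus 0} then guarantees an isolated point on $X_R$ with $j$-invariant $j$. In case (a), $X_R$ is isomorphic to $\PP^1$ or to a rank-one elliptic curve over $\QQ$; in the former every closed point is $\PP^1$-parametrized by Theorem \ref{thm:RR isolated}, and in the latter every closed point of positive degree is AV-parametrized by $X_R$ itself, so $X_R$ admits no isolated points and case (a) is ruled out. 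In case (b), $X_R$ has only finitely many non-cuspidal non-CM rational points, and the RSZB and Sutherland classifications of exceptional images at primes $\ell \in \{11, 17, 37\}$ pin $j$ down to the six values listed in Theorem \ref{thm:Bou25-2 X_1 and X_0}(2). Each such value is the $j$-invariant of an isolated point on $X_0(\ell)$ for some $\ell \in \{11, 17, 37\}$, which is the desired conclusion.

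The principal obstacle is this last step: for $\ell \in \{11, 17, 37\}$ and $R$ a known exceptional image in case (b), one must verify that the rational $j$-invariants producing isolated points on $X_R$ coincide exactly with the six exceptional values, rather than forming a strictly larger collection. This is a finite check on the RSZB and Sutherland tables: for each such $R$, compute the genus of $X_R$ and the degree of the corresponding rational point, then confirm via Theorem \ref{thm:RR isolated} that the point is genuinely isolated; conversely, trace each of the six $j$-invariants in Theorem \ref{thm:Bou25-2 X_1 and X_0}(2) back to a specific such $R$. Once this bookkeeping is complete, the theorem follows by combining the four case eliminations with this identification.
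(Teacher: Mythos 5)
Your proposal follows essentially the same route as the paper: a case analysis on the $\ell$-adic image $R$ via Theorem \ref{thm:RSZB l-adic}, with Propositions \ref{prop:X Delta prime power:cases a and b}, \ref{prop:X Delta prime power:case c}, and \ref{prop:X Delta prime power:case d} dispatching cases (c), (d), and (a)/(b) for $\ell \notin \{11,17,37\}$, and the residual $\ell \in \{11,17,37\}$ case resolved by matching against the classification of known $\ell$-adic images. If anything, you are more explicit than the paper's terse closing paragraph in spelling out the passage to $X_R$ via Theorem \ref{thm:Ter24 genus 0} and the elimination of case (a); the ``principal obstacle'' you flag at the end is precisely the finite bookkeeping the paper handles by asserting that the known exceptional $j$-invariants at levels $17$ and $37$ all give rise to rational points on $X_0(17)$ or $X_0(37)$ (and, for $\ell=11$, that Theorem \ref{thm:Bou25-2 X_1 and X_0} covers the only two surviving $\Delta$).
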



\begin{proof}
    %
    
    \par
    Let $E/\QQ$ be a non-CM elliptic curve and let $\ell \in \NN$ be prime. 
    By Theorem \ref{thm:RSZB l-adic}, there are four cases for the image $R \defeq \rho_{E, \ell^\infty}(\Gal_\QQ) \leq \GL_2(\ZZ_\ell)$. If $R$ is in case \ref{thm:RSZB:a} or \ref{thm:RSZB:b} and $\ell \neq 11,17, 37$, then Proposition \ref{prop:X Delta prime power:cases a and b} implies that $j(E)$ is not the $j$-invariant of an isolated point on a modular curve $X_\Delta(\ell^n)$. Similarly, if $R$ is in case \ref{thm:RSZB:c} or case \ref{thm:RSZB:d}, then Proposition \ref{prop:X Delta prime power:case c} and Proposition \ref{prop:X Delta prime power:case d} imply that $j(E)$ does not appear as the $j$-invariant of an isolated point on a modular curve $X_\Delta(\ell^n)$. 
    
    \par
    It remains to consider the case when $\ell = 11, 17, 37$ and $R$ is a known image. Because $\phi(11)/2$ is prime, there are no intermediate modular curves of level $11$. The $j$-invariants of the known images of levels $17$ and $37$ all arise from rational points on $X_0(17)$ or $X_0(37)$, as desired. 
\end{proof}

\DeclareEmphSequence{\textrm\itshape}
\printbibliography[heading=bibnumbered]
\renewcommand{\footnotesize}{\normalsize}

\end{document}